\crefname{subsection}{Subsection}{Subsections}
\crefname{claim}{Claim}{Claims}
\crefname{problem}{Problem}{Problems}
\def\namedlabel#1#2{\begingroup
   \def\@currentlabel{#2}%
   \label{#1}\endgroup
}
\declaretheorem[name=Theorem, numberwithin=section]{theorem}
\declaretheorem[name=Lemma, sibling=theorem]{lemma}
\declaretheorem[name=Corollary, sibling=theorem]{corollary}
\declaretheorem[name=Problem, sibling=theorem]{problem}
\declaretheorem[name=Claim, sibling=theorem]{claim}
\declaretheorem[name=Claim, numbered=no]{claim*}
\newenvironment{proofofclaim}{\noindent{\emph{Proof of the Claim}:}}{\hfill$\Diamond$\medskip}
\declaretheorem[name=Remark, style=remark, sibling=theorem]{remark}
\def\cqedsymbol{\ifmmode$\lrcorner$\else{\unskip\nobreak\hfil
\penalty50\hskip1em\null\nobreak\hfil$\lrcorner$
\parfillskip=0pt\finalhyphendemerits=0\endgraf}\fi}
\newcommand{\Cay}{\mathrm{Cay}}
\newcommand{\Aut}{\mathrm{Aut}}
\newcommand{\Stab}{\mathrm{Stab}}
\newcommand{\Sep}{(Y,S,Z)}
\newcommand{\tc}{\widetilde{c}}
\newcommand*\Sepi[1]{(Y_{#1},S_{#1},Z_{#1})}
\newcommand*\sg[1]{\{ #1 \}}
\def\C{\mathcal{C}} 
\def\X{\mathcal{X}} 
\let\le\leqslant
\let\ge\geqslant
\let\leq\leqslant
\let\geq\geqslant
\begin{document}

\title[Periodic colorings and orientations in infinite graphs]{Periodic colorings and orientations \\in infinite graphs}

\author[T.~Abrishami]{Tara Abrishami}
\address[T.~Abrishami]{Department of Mathematics, University of
  Hamburg, Hamburg, Germany}
\email{tara.abrishami@uni-hamburg.de}

\author[L.~Esperet]{Louis Esperet}
\address[L.~Esperet]{Univ.\ Grenoble Alpes, CNRS, Laboratoire G-SCOP,
  Grenoble, France}
\email{louis.esperet@grenoble-inp.fr}

\author[U.~Giocanti]{Ugo Giocanti}
\address[U.~Giocanti]{Theoretical Computer Science Department, Faculty of Mathematics and Computer Science, Jagiellonian University, Krak\'ow,
 Poland}
\email{ugo.giocanti@uj.edu.pl}

\author[M.~Hamann]{Matthias Hamann}
\address[M.~Hamann]{Department of Mathematics, University of
  Hamburg, Hamburg, Germany}
\email{matthias.hamann@math.uni-hamburg.de}

\author[P.~Knappe]{Paul Knappe}
\address[P.~Knappe]{Department of Mathematics, University of
  Hamburg, Hamburg, Germany}
  \email{paul.knappe@uni-hamburg.de}

\author[R.G.~M\"oller]{R\"ognvaldur G. M\"oller}
\address[R.G.~M\"oller]{Science Institute, University of Iceland, Reykjav\'ik, Iceland}
\email{roggi@hi.is}



\thanks{T.A. is supported by the National Science Foundation Award Number DMS-2303251 and the Alexander von Humboldt Foundation. L.E. and U.G. are partially supported by the French ANR Project
  GrR (ANR-18-CE40-0032), TWIN-WIDTH
  (ANR-21-CE48-0014-01), and by LabEx
  PERSYVAL-lab (ANR-11-LABX-0025). U. G. is supported by the National Science Center of Poland
under grant 2022/47/B/ST6/02837 within the OPUS 24 program.
  M.H. is funded by DFG, Project No. 549406527.
  P.K. is supported by a doctoral scholarship of the Studienstiftung des deutschen Volkes.}

\date{}

\begin{abstract}
  We study the existence of periodic colorings and orientations
  in locally finite graphs. A coloring or orientation of a
  graph $G$ is \emph{periodic} if the resulting colored or oriented graph is
  \emph{quasi-transitive}, meaning that $V(G)$ has finitely many
  orbits under the action of the group of automorphisms of
  $G$ preserving the coloring or the orientation. When such a periodic
  coloring
  or orientation of $G$ exists, $G$ itself must be quasi-transitive and it is
  natural to investigate when quasi-transitive graphs have such
  periodic colorings or orientations. We provide examples of Cayley
  graphs with no periodic orientation or non-trivial coloring, and examples of
  quasi-transitive graphs of treewidth 2 without periodic
  orientation or proper coloring. On the other hand we show that every
  quasi-transitive graph $G$ of bounded pathwidth has a periodic
  proper coloring with $\chi(G)$ colors and a periodic orientation. We relate
  these problems with techniques and questions from symbolic dynamics and distributed computing and conclude
  with a number of open problems.
\end{abstract}

\maketitle

\section{Introduction}

The purpose of this paper is to investigate when highly-symmetric graphs have highly-symmetric colorings or orientations. Questions of the same flavor have been raised in a number of different settings. In the emerging field of descriptive graph theory (see \cite{Pik21} for a recent survey), the graphs under study are equipped with a topology or a measure, and the goal is to find combinatorial objects, such as proper colorings, that behave well with respect to the underlying topology or measure. This defines notions such as the Borel chromatic number of a Borel graph, and these notions are then compared with the classical chromatic number for various graph classes. Another example is that of the recursive chromatic number of a recursive graph: here the graphs under consideration are recursive sets (adjacency in the graph can be computed by a Turing machine), and the goal is to find a proper coloring which is also recursive \cite{Bea76, Kie81, Sch80}.

The early motivation for studying recursive colorings of infinite (recursive) graphs is that a number of classical coloring results for infinite graphs \cite{dBE51,Got51} use compactness arguments and thus do not provide an explicit description of the resulting colorings. If a graph $G$ is highly-symmetric, for instance if $G$ is the Cayley graph of some group $\Gamma$, a simple way to give a finite description of a proper coloring of $G$ is to assign colors to a finite subset $X$ of vertices of $G$, and then transfer the colors of $X$ to all the remaining vertices using the action of a specific subgroup $\Gamma'$ of $\Gamma$ on the graph $G$. This provides an explicit, finite description of a coloring of $G$, provided for instance that $\Gamma$ and $\Gamma'$ are automatic. As a simple example of this procedure, consider any Cayley graph $G$ of $\mathbb{Z}^2$, and observe that any precoloring of any large enough square region around the identity, with pairwise distinct colors, can be extended to a proper coloring of $G$ by translation.

We now explain what we mean by a highly-symmetric graph or coloring, starting with graphs. A natural choice is the class of \emph{vertex-transitive} graphs, which are graphs $G$ in which for any two vertices $u,v$ of $G$, there is an automorphism of $G$ that maps $u$ to $v$ (in other words, the automorphism group $\Aut(G)$ of $G$ acts transitively on $G$). Note that this class contains all Cayley graphs. We will indeed consider the slightly more general class of \emph{quasi-transitive} graphs, which are graphs $G$ such that the vertex set of $G$ has finitely many orbits under the action of $\Aut(G)$. 
This means that $V(G)$ can be partitioned into $V_1,\ldots,V_k$ so that for any $1\le i \le k$ and any $u,v\in V_i$, there is an automorphism of $G$ that maps $u$ to $v$. Every vertex-transitive graph is
quasi-transitive. In this paper, all the graphs we consider will be \emph{locally
finite}, meaning that every vertex has finite degree. Note that for quasi-transitive graphs, being locally finite is equivalent to having bounded degree.

One motivation for considering quasi-transitive graphs instead of transitive graphs is that these graphs are much less regular (which allows inductive approaches involving 
tree-decompositions into simpler quasi-transitive graphs
\cite{EGL23,HamannPlanar,HamannAccessibility}) and even though the class is more general, several classical group decomposition results such as Stallings theorem still
apply to them \cite{HamannStallings22}.
The definition of quasi-transitive graph can be naturally extended to vertex-colored
(resp.\ edge-colored or oriented) graphs by restricting the group of
automorphisms to automorphisms preserving the colors of the vertices
(resp.\ the colors or orientations of the edges).

\medskip

We say that a vertex- or edge-coloring or an orientation of a graph
$G$ is \emph{periodic} if the resulting vertex- or edge-colored or
oriented graph is quasi-transitive. In particular, observe that every periodic coloring of a graph $G$ always involves a finite number of colors. Note that this definition corresponds precisely to the procedure described above: in the case of a periodic vertex coloring we only have to fix the colors of a finite number of vertices (one in each vertex-orbit for the action of the subgroup of color-preserving automorphisms), and then this coloring naturally extends to the whole graph by the action of this subgroup. Similarly, in a periodic edge-coloring or orientation, we only have to fix the colors or orientations of a finite set of edges and these extend to the whole graph by a group action.

\medskip

Note that if a periodic coloring or
orientation exists, then $G$ itself must be quasi-transitive. This motivates the following questions, which were raised
in \cite{EGL23}.

\begin{problem}[Problem 6.3 in \cite{EGL23}]\label{pro:4}
    Is it true that any locally finite quasi-transitive graph has a periodic proper vertex-coloring?
\end{problem}

\begin{problem}[Problem 6.4 in \cite{EGL23}]\label{pro:5}
    Is it true that any locally finite quasi-transitive graph has a
    periodic orientation?
  \end{problem}

The following related problem is attributed to Bowen and Lyons in \cite{Tim11}.

 \begin{problem}[Question 4.8 in \cite{Tim11}]\label{pro:pcplanar}
      Is it true that every locally finite quasi-transitive planar graph has a periodic proper vertex-coloring with 4 colors?
  \end{problem}

These questions are specific instances of a more general problem, which asks whether every quasi-transitive graph $G$
can be
decorated with a non-trivial additional structure such that the graph $G$ is still
quasi-transitive if we restrict ourselves to automorphisms that fix
the additional  structure. It should be noted that the original motivation for raising Problems \ref{pro:4} and \ref{pro:5} is a bit different from the motivation presented in this introduction (the original goal of the authors was to find periodic structures in specific quasi-transitive graphs in order to simplify some canonical decompositions).

\medskip

  Observe that a positive answer to Problem \ref{pro:4} would imply a
  positive answer to Problem \ref{pro:5}: the orientation defined from a vertex-coloring by choosing a total ordering on the colors and orienting each edge from the endpoint with the smaller color to the endpoint with the larger color is preserved by every automorphism that preserves the coloring. 
  

  \medskip
  


  \subsection*{Main results} The goal of this note is to present examples showing that
  Problems \ref{pro:4}, \ref{pro:5} and \ref{pro:pcplanar} have negative answers.
  The first example showing that Problem \ref{pro:4} has a negative answer
  was constructed by Hamann and M\"oller. It was then observed
  by Abrishami, Esperet and Giocanti, and independently by Norin and
  Przytycki, that a variant of this example could also be used to provide a negative
  answer to Problem \ref{pro:5}. Norin and
  Przytycki furthermore showed that the examples can be chosen to be
  Cayley graphs (and thus vertex-transitive), rather than merely
  quasi-transitive. The examples are presented in Section \ref{sec:example}. They are based on the existence of
  finitely generated infinite simple groups, whose known constructions are highly
  non-trivial. As the resulting  graphs are 1-ended, a natural question is whether similar negative examples can be obtained for graphs with 2 or infinitely many ends. We indeed present a simple alternative construction of a negative answer to Problems \ref{pro:4} and \ref{pro:5} which has treewidth 2 (and is in particular planar and $\infty$-ended) and is perfect. This gives in particular a negative answer to Problem \ref{pro:pcplanar} as well, in a very strong sense (the answer remains negative for any number of colors). We also construct a graph $G$ which has a periodic proper coloring with $\chi(G)+1$ colors, but admits no periodic proper coloring with $\chi(G)$ colors.
  
  On
  the positive side, we prove that for graphs of bounded pathwidth, Problems \ref{pro:4} and \ref{pro:5} have a positive answer (and moreover a periodic proper coloring with $\chi(G)$ colors can always be obtained in this case). The setting of bounded pathwidth is
  very natural (for infinite Cayley graphs, it corresponds to the case of 2-ended groups) and has interesting connections with symbolic dynamics.

  \subsection*{Related work} Problem \ref{pro:4} has a natural dual problem:  Is it possible to find a vertex coloring so that  no non-trivial automorphism of the graph preserves the coloring?  Is it possible with just two colors?  Assuming that no non-trivial automorphism fixes all but finitely many vertices,  Babai proved that two colors suffice \cite{Bab22}.

  \medskip
  
  Probabilistic variants of
  Problems \ref{pro:4} and \ref{pro:5} have been investigated
  extensively in probability theory. In this setting, the goal is not
  to find a specific coloring which is invariant (or almost invariant)
  under automorphisms, but rather a \emph{random}  coloring which is
  invariant under automorphims (in the sense that the random process that generates the proper coloring is  invariant under automorphism). See for instance \cite{Tim24} and the
  references therein. The results there seem to be more positive than
  in our setting (in particular it is proved in \cite{Tim24} that a
  specific type of random proper coloring based on factors of iid with
  independence at distance more than 4 exists in any graph of bounded
  degree, using a bounded number of colors -- this type of random
  proper coloring is invariant under automorphisms). Note however that
  random proper colorings that are invariant under automorphisms do
  not necessarily produce proper colorings that are invariant under
  automorphisms.

  \medskip

  The connections between our problems and symbolic dynamics are
  highlighted in Section \ref{sec:pw} (we refer the reader to this
  section for the details).

  Finally, we mention an interesting connection between the topic of
  this paper and techniques from distributed computing in Section \ref{sec:prel}.

\subsection*{Organization of the paper} We start by giving some
definitions and basic results on graph theory and group theory in
Section \ref{sec:prel}. In Section \ref{sec:example}, we give our
first example providing a negative
  answer to Problems \ref{pro:4} and \ref{pro:5}. It turns out that
  the example even provides a negative answer to a much weaker
  problem, where we only seek a periodic non-trivial coloring (instead of a
  periodic proper coloring). In Section \ref{sec:tw}
  we show how to construct graphs of bounded treewidth with no
  periodic orientation, providing a negative
  answer to Problems \ref{pro:4} and \ref{pro:5} even for graphs of
  bounded treewidth. In Section
  \ref{sec:pw} we show that Problems \ref{pro:4} and \ref{pro:5} have
  a positive answer if we restrict ourselves to graphs of bounded
  pathwidth. This is done using connections between our problems
  and classical results on subshifts of finite type in symbolic
  dynamics.
  We conclude in Section \ref{sec:ccl} with a discussion and a number
  of open problems.

  \section{Preliminaries}\label{sec:prel}

\subsection*{Graphs}
A vertex-coloring of a graph $G$ is \emph{proper} if every two adjacent
vertices in $G$ are assigned distinct colors. The \emph{chromatic
  number} of $G$, denoted by $\chi(G)$, is the minimum number of
colors in a proper vertex-coloring of $G$. A \emph{path} is a connected acyclic graph in which every vertex has degree at most $2$.

\medskip

A {\it tree-decomposition} of a graph $G$ is a pair $(T,\X)$ such that $T$ is a tree and $\X$ is a collection $(X_t: t \in V(T))$ of subsets of $V(G)$, called the {\it bags}, such that
	\begin{itemize}
		\item $\bigcup_{t \in V(T)}X_t = V(G)$,
		\item for every $e \in E(G)$, there exists $t \in V(T)$ such that $X_t$ contains the endpoints of $e$, and
		\item for every $v \in V(G)$, the set $\{t \in V(T): v \in X_t\}$ induces a connected subgraph of $T$.
	\end{itemize}
        For a tree-decomposition $(T,\X)$ as above, the sets $ X_t \cap X_{t'}
        $ for $tt' \in E(T)$ are called the \emph{adhesion sets} of $(T,\X)$, and the {\it width} of $(T,\X)$ is $\sup_{t \in V(T)}\lvert X_t \rvert-1$.
The {\it treewidth} of $G$ is the minimum width of a
tree-decomposition of $G$.

If the tree $T$ in a tree-decomposition $(T,\X)$ of a graph $G$ is a
path, then $(T,\X)$ is called a \emph{path-decomposition} of $G$, and
the \emph{pathwidth} of $G$ is the minimum width of a
path-decomposition of $G$.

\medskip

A
\emph{ray} in an infinite graph $G$ is an infinite one-way path in
$G$. Two rays of $G$ are said to be 
\emph{equivalent} if there are infinitely many disjoint paths between them in
$G$.   
An \emph{end} of $G$ is an
equivalence class of rays in $G$. It is known that the number of ends
of a connected locally finite quasi-transitive graph is either $0,1,2$ or $\infty$ \cite[Proposition 2.1]{Babai97}.
A graph with $k$ ends (for $k\in \mathbb{N}\cup \{\infty\}$)
is said to be \emph{$k$-ended}.
For every finite set $S\subseteq V(G)$ and every ray $r$, note that there exists a unique connected component of $G-S$ containing an infinite number of vertices of $r$. If $X$ denotes such a component, we say that $r$ \emph{lives} in $X$. We say that an end $\omega$ \emph{lives} in a component $X$ of $G-S$ if some (and thus all) of its rays live in $X$.

\subsection*{Groups}
  The \emph{index} of a subgroup $\Gamma'$ in a group $\Gamma$ is the
  cardinality of the family of \emph{left cosets} $\{g\Gamma':g \in
  \Gamma\}$, where  $g\Gamma'=\{gh:h \in \Gamma'\}$. Equivalently, the
  index of $\Gamma'$ in $\Gamma$ is the
  cardinality of the family of \emph{right cosets} $\{\Gamma'g:g \in
  \Gamma\}$, where  $\Gamma'g=\{hg:h \in \Gamma'\}$. If a group
  $\Gamma$ has a subgroup $\Gamma'$ of finite index then we say that
  $\Gamma$ is \emph{virtually} $\Gamma'$.  Furthermore, if $\mathcal{P}$ is a group theoretic property then $\Gamma$ is said to be \emph{virtually} $\mathcal{P}$ if $\Gamma$ contains a subgroup of finite index that has property $\mathcal{P}$.

  \medskip

  Given a finitely generated group $\Gamma$ and a finite set of
generators $S$ not containing the identity element $1_\Gamma$, the \emph{(left) Cayley graph} of $\Gamma$ with respect to
the set of generators $S$ is the graph 
$\Cay(\Gamma,S)$ whose vertex set is the set of elements of $\Gamma$
and where for every two elements $g,h\in \Gamma$ there is an edge
between $g$ and $h$ if and only $hg^{-1}\in S\cup S^{-1}$. Whenever we talk about a Cayley graph $\Cay(\Gamma,S)$, we always assume that $S$ is finite and does not contain $1_\Gamma$ (even if this is not stated explicitly). For a graph class $\C$, we say that a finitely generated group
is in $\C$ if it has a Cayley graph that is in $\C$.

\medskip

  The number of ends of a Cayley graph of a finitely generated group
  does not depend on the choice of generators (it follows from the observation that the number of ends of a locally finite graph is a quasi-isometric invariant), so we can also talk
  about the number of ends of a group. A group is 0-ended if and only
  if it is finite. 
  A finitely generated group is 2-ended if and only if it is virtually
  $\mathbb{Z}$ 
  \cite{Hopf44}, if and only if it has finite pathwidth. A finitely generated group has finite treewidth if and only if it is virtually free \cite{Pichel09}. 

  \medskip

   A subgroup $\Gamma'$ of a group $\Gamma$ is \emph{normal} if $g
  g'g^{-1}\in \Gamma'$ for every $g\in \Gamma$ and $g'\in \Gamma'$
  (equivalently, the left and right cosets  $g\Gamma'$ and $\Gamma'
  g$ are equal for every $g\in \Gamma$).  A group
  $\Gamma$ is \emph{simple} if the only normal subgroups of $\Gamma$
  are $\Gamma$ itself and the trivial subgroup $\{1_\Gamma\}$ consisting of the
  identity element $1_\Gamma$ of $\Gamma$.

\medskip

  We will need the following basic
  result (which is standard, but we include a proof for the sake of completeness).

  \begin{lemma}\label{lem:1}
Let $\Gamma'$ be a subgroup of finite index in a group $\Gamma$. Then
$\Gamma$ has a normal subgroup $\Gamma''$ of finite index such that
$\Gamma''$ is a subgroup of $\Gamma'$.
  \end{lemma}
  
  \begin{proof}
The action of $\Gamma$ on the right cosets $\mathcal{C}=\{\Gamma'g:g\in \Gamma\}$ of $\Gamma'$ induces a permutation of $\mathcal{C}$ (where each $h\in \Gamma$ maps each right coset $\Gamma'g$ to $\Gamma'gh$), and this action is a group homomorphism $f$ from $\Gamma$ to $\mathrm{Sym}(\mathcal{C})$, the symmetric group on $\mathcal{C}$. Moreover, since $\Gamma'$ has finite index, $\mathrm{Sym}(\mathcal{C})$ is finite. The kernel $\ker f$ of $f$ consists of all elements $g\in \Gamma$ which stabilize every right coset of $\Gamma'$ (in particular, $\ker f$ is contained in $\Gamma'$). By the first isomorphism theorem, $\ker f$ is a normal subgroup of $\Gamma$ and the quotient group $\Gamma/\ker f$ is isomorphic to the image of $f$ (which is a finite subgroup of $\mathrm{Sym}(\mathcal{C})$). Hence, $\ker f$ is a subgroup of finite index in $\Gamma$.
  \end{proof}

  This directly implies the following classical property of infinite simple groups.

   \begin{corollary}\label{cor:2}
     Let $\Gamma$ be an infinite simple group. Then $\Gamma$ has
     no proper subgroup of finite index.
  \end{corollary}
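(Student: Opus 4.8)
The plan is to argue by contradiction and let \cref{lem:1} do the work. Suppose for contradiction that $\Gamma$ has a proper subgroup $\Gamma'$ of finite index. The first step is to apply \cref{lem:1} to $\Gamma'$, which produces a normal subgroup $\Gamma''$ of $\Gamma$ that has finite index and satisfies $\Gamma'' \subseteq \Gamma'$. The reason for passing from $\Gamma'$ to $\Gamma''$ is that simplicity is a statement about \emph{normal} subgroups, whereas the given finite-index subgroup $\Gamma'$ need not itself be normal; \cref{lem:1} bridges exactly this gap.

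With $\Gamma''$ in hand, I would invoke simplicity: the only normal subgroups of $\Gamma$ are $\Gamma$ itself and the trivial subgroup $\{1_\Gamma\}$, so $\Gamma''$ must be one of these two, and I would rule out each in turn. If $\Gamma'' = \Gamma$, then the chain of inclusions $\Gamma = \Gamma'' \subseteq \Gamma' \subseteq \Gamma$ forces $\Gamma' = \Gamma$, contradicting the assumption that $\Gamma'$ is proper. If instead $\Gamma'' = \{1_\Gamma\}$, then the trivial subgroup has finite index in $\Gamma$; since the index of $\{1_\Gamma\}$ equals $\lvert \Gamma \rvert$, this says that $\Gamma$ is finite, contradicting the hypothesis that $\Gamma$ is infinite. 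In either case we reach a contradiction, so no proper subgroup of finite index can exist.

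I do not expect any genuine obstacle here: once \cref{lem:1} is available, the corollary reduces to a two-case analysis, and the only point worth stating carefully is why a finite-index trivial subgroup forces $\Gamma$ to be finite (namely that its index is precisely $\lvert \Gamma \rvert$).
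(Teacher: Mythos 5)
Your proposal is correct and follows essentially the same argument as the paper: contradiction, an application of \cref{lem:1} to obtain a normal finite-index subgroup $\Gamma''\subseteq\Gamma'$, then simplicity plus infiniteness of $\Gamma$ to rule out both possibilities for $\Gamma''$. The only cosmetic difference is that the paper observes $\Gamma''$ is proper (since $\Gamma'$ is) and thus trivial, whereas you split explicitly into the two cases $\Gamma''=\Gamma$ and $\Gamma''=\{1_\Gamma\}$; these are the same argument.
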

  
  \begin{proof}
    Assume for the sake of contradiction that $\Gamma$ has
     proper subgroup $\Gamma'$ of finite index. By Lemma \ref{lem:1},
    there is a normal subgroup $\Gamma''$ of finite index in $\Gamma$
    which is a subgroup of $\Gamma'$. Note that since $\Gamma'$ is a
    proper subgroup of $\Gamma$, $\Gamma''$ is also a proper subgroup
    of $\Gamma$. As $\Gamma$ is a simple group, $\Gamma''$ is the
    singleton consisting of the identity element of $\Gamma$. Since
    $\Gamma$ is infinite, we obtain that $\Gamma''$ has infinite index in $\Gamma$,
    which is a contradiction.
  \end{proof}

\subsection*{Periodicity and strong periodicity}

 We say that a vertex- or edge-coloring or an orientation of a graph $G$ is
 \emph{periodic} if the subgroup of all automorphisms of $G$ preserving the
 coloring or the orientation acts \emph{quasi-transitively} on $V(G)$,
 meaning that $V(G)$ has finitely many orbits under the action of the
 color-preserving (or orientation-preserving) automorphisms of $G$. In
 other words, there is a partition of $V(G)$ into finitely many
 subsets $V_1,\ldots,V_k$ such that for every $1\le i \le k$ and every
 $u,v\in V_i$, there is a color-preserving (or orientation-preserving)
 automorphism of $G$ that maps $u$ to $v$. Here, we say that an automorphism of a vertex-colored graph is color-preserving if the image of any vertex is a vertex of the same color. Similarly, a color-preserving automorphism of an edge-colored graph maps any edge to an edge of the same color, and an orientation-preserving automorphism of an oriented graph maps every edge to an edge with the same orientation. 

 \medskip

 We say that a vertex- or edge-coloring or an orientation of a graph $G$ is
 \emph{strongly periodic} if the subgroup of all automorphisms of $G$ preserving the
 coloring (or the orientation) has finite index in $\Aut(G)$, the group of automorphisms of $G$. Strongly
 periodic colorings naturally arise in symbolic dynamics (see Section
 \ref{sec:pw}) and are related to periodic colorings via the following
 simple results.

 \begin{lemma}\label{lem:sptop}
For every quasi-transitive graph $G$, every subgroup $\Gamma$ of finite index
of $\Aut(G)$
acts quasi-transitively on $V(G)$. 
\end{lemma}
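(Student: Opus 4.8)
The plan is to show that each orbit of $\Aut(G)$ on $V(G)$ splits into only finitely many orbits of $\Gamma$. Since $G$ is quasi-transitive, there are finitely many $\Aut(G)$-orbits to begin with, so bounding the number of $\Gamma$-orbits inside each one and summing will give the result.

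Concretely, let $m=[\Aut(G):\Gamma]$, which is finite by hypothesis, and fix a decomposition $\Aut(G)=\Gamma a_1\cup\cdots\cup\Gamma a_m$ into right cosets of $\Gamma$. For an arbitrary vertex $w\in V(G)$, I would compute the $\Aut(G)$-orbit of $w$ as $\Aut(G)\cdot w=\bigcup_{i=1}^m \Gamma a_i\cdot w=\bigcup_{i=1}^m \Gamma\cdot(a_i w)$, where the last equality uses that the action is on the left, so $\Gamma a_i\cdot w=\Gamma\cdot(a_i\cdot w)$ is exactly the $\Gamma$-orbit of the vertex $a_i w$. This exhibits the single $\Aut(G)$-orbit of $w$ as a union of at most $m$ orbits of $\Gamma$, namely the $\Gamma$-orbits of the vertices $a_1 w,\ldots,a_m w$.

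To finish, I would invoke quasi-transitivity of $G$: let $w_1,\ldots,w_k$ be representatives of the $k$ orbits of $\Aut(G)$ on $V(G)$, so that $V(G)=\bigcup_{j=1}^k \Aut(G)\cdot w_j$. Applying the previous step to each $w_j$ yields $V(G)=\bigcup_{j=1}^k\bigcup_{i=1}^m \Gamma\cdot(a_i w_j)$, a union of at most $km$ orbits of $\Gamma$. Hence $\Gamma$ acts with at most $km<\infty$ orbits on $V(G)$, which is precisely quasi-transitivity of the $\Gamma$-action.

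There is no serious obstacle here, as the argument is pure orbit bookkeeping; the only point requiring a little care is the coset side. One must decompose $\Aut(G)$ into cosets on the correct side so that left-multiplying the representatives $a_i$ onto $w$ sweeps out the entire $\Aut(G)$-orbit. Using the right cosets $\Gamma a_i$ as above makes $\bigcup_i \Gamma a_i\cdot w$ equal to $\Aut(G)\cdot w$, which is exactly what the counting needs.
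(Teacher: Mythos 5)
Your proof is correct and is essentially identical to the paper's: both decompose $\Aut(G)$ into finitely many right cosets $\Gamma a_i$, pick representatives of the finitely many $\Aut(G)$-orbits, and observe that the $\Gamma$-orbits of the vertices $a_i w_j$ cover $V(G)$, giving at most $km$ orbits. Nothing is missing.
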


\begin{proof}
  Let $v_1,\ldots,{v_k}\in V(G)$ be finitely many representatives of
  the orbits of $V(G)$ under the
  action of $\Aut(G)$, and let $\Gamma g_1,\ldots,\Gamma g_\ell$ be
  the finitely many right cosets of $\Gamma$, where $g_j\in
  \Aut(G)$ for any $1\le j \le \ell$. Note that for every $1\leq i\leq k$ and $1\leq j\leq \ell$, the group $\Gamma$ acts transitively on the set $\Gamma g_j(v_i):=\sg{gg_j(v_i): g\in \Gamma}$. As the family of sets $\sg{\Gamma g_j(v_i): 1\leq i\leq k, 1\leq j\leq \ell}$ covers the vertex set $V(G)$, it follows that $\Gamma$ acts quasi-transitively on $V(G)$,
  as desired.
\end{proof}

The converse of Lemma \ref{lem:sptop} does not hold in general. To see this,
consider the free group $F_{a,b}$ on 2 generators $a$ and $b$, and let $T = \Cay(F_{a, b}, \{a, b\})$ be the
natural Cayley graph associated to $F_{a,b}$ and its two generators
($T$ is the infinite 4-regular tree). Now $F_{a,b}\subseteq \Aut(T)$ acts
transitively on $V(T)$, but as $\Aut(T)$ is not
virtually free, $F_{a,b}$ does not have finite index in $\Aut(T)$.

\medskip

By taking $\Gamma$ to be the subgroup of color-preserving or
orientation-preserving automorphisms of $G$, we  obtain the following immediate consequence of Lemma \ref{lem:sptop}.

\begin{corollary}\label{cor:sptop}
Every strongly periodic coloring or orientation of a quasi-transitive graph $G$ is periodic.
\end{corollary}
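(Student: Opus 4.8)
The plan is to derive \Cref{cor:sptop} as an immediate application of \Cref{lem:sptop}, following the hint given just before the statement: take $\Gamma$ to be the subgroup of color-preserving (respectively orientation-preserving) automorphisms of $G$. So suppose we are given a strongly periodic coloring or orientation of the quasi-transitive graph $G$. By the very definition of strong periodicity, the subgroup $\Gamma \le \Aut(G)$ consisting of all automorphisms of $G$ that preserve this coloring (or orientation) has finite index in $\Aut(G)$.

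Now I would simply invoke \Cref{lem:sptop} with this choice of $\Gamma$. Since $G$ is quasi-transitive and $\Gamma$ is a subgroup of finite index in $\Aut(G)$, \Cref{lem:sptop} tells us that $\Gamma$ acts quasi-transitively on $V(G)$; that is, $V(G)$ has only finitely many orbits under the action of $\Gamma$.

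Finally, I would observe that this is precisely the definition of the given coloring (or orientation) being periodic: the subgroup of all color-preserving (or orientation-preserving) automorphisms of $G$, which is exactly $\Gamma$, acts quasi-transitively on $V(G)$. Hence the coloring or orientation is periodic, as claimed.

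There is essentially no obstacle here, since the corollary is a direct specialization of the lemma; the only point requiring a moment's care is the bookkeeping identification, namely recognizing that ``the subgroup of automorphisms preserving the structure'' is simultaneously the object whose finite index is asserted by strong periodicity (the hypothesis) and the object whose quasi-transitive action is required by periodicity (the conclusion). Once this identification is made explicit, the two conditions differ only in whether one measures the subgroup by its index in $\Aut(G)$ or by the number of orbits of its action, and \Cref{lem:sptop} is exactly the bridge between these two measurements.
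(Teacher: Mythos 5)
Your proposal is correct and is exactly the paper's argument: the paper derives \cref{cor:sptop} as an immediate consequence of \cref{lem:sptop} by taking $\Gamma$ to be the subgroup of color-preserving (or orientation-preserving) automorphisms, just as you do. Your explicit remark that this single subgroup is both the object of the strong-periodicity hypothesis and of the periodicity conclusion is the same (routine) identification the paper leaves implicit.
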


\subsection*{Graphical
regular representations} Consider a Cayley graph $G=\Cay(\Gamma,S)$ of a finitely
generated group $\Gamma$. Note that $\Gamma$ naturally acts by right-multiplication on $G$ (for any $g\in \Gamma$, the map $h\mapsto hg$ induces an automorphism of $G$). The Cayley graph $G=\Cay(\Gamma,S)$ is said to be a \emph{graphical
regular representation} of $\Gamma$ if $\Gamma=\Aut(G)$ (in other words, the only automorphisms of $G$ are the right-multiplications by elements of $\Gamma$).
We now prove that for graphical
regular representations of groups, the converse of Lemma \ref{lem:sptop}
holds.

 \begin{lemma}\label{lem:ptosp}
   Let $G$ be a graphical
   regular representation of a finitely generated group. Then every
   subgroup of $\Aut(G)$ acting quasi-transitively on $V(G)$
   has finite index in $\Aut(G)$. 
\end{lemma}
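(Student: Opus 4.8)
The plan is to exploit the fact that, in a graphical regular representation, the automorphism group acts \emph{regularly} (i.e.\ simply transitively) on the vertices, so that counting orbits of a subgroup reduces to counting cosets. Concretely, I would first identify $V(G)$ with $\Gamma$ and recall that, since $G=\Cay(\Gamma,S)$ is a graphical regular representation, we have $\Aut(G)=\Gamma$ acting by right-multiplication: the automorphism corresponding to $g\in\Gamma$ sends the vertex $h$ to $hg$. This action is simply transitive on $V(G)=\Gamma$.

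Next, let $\Gamma'$ be a subgroup of $\Aut(G)=\Gamma$, and consider its action on the vertices. The orbit of a vertex $h\in V(G)=\Gamma$ under $\Gamma'$ is exactly $\{hg:g\in\Gamma'\}=h\Gamma'$, a left coset of $\Gamma'$. Hence the orbits of $\Gamma'$ on $V(G)$ are precisely the left cosets of $\Gamma'$ in $\Gamma$, and the number of such orbits equals the index $[\Gamma:\Gamma']=[\Aut(G):\Gamma']$. It then follows that $\Gamma'$ acts quasi-transitively on $V(G)$ exactly when there are finitely many orbits, which by the previous sentence happens exactly when $[\Aut(G):\Gamma']$ is finite, i.e.\ when $\Gamma'$ has finite index in $\Aut(G)$. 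This is precisely the converse of \cref{lem:sptop} in the present setting.

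I expect no genuine obstacle here: the entire content is that regularity of the action makes orbit-counting coincide with index-counting. The only point requiring a little care is the bookkeeping around left versus right multiplication, since right-multiplication by elements of $\Gamma'$ produces left cosets as orbits; but this does not affect the index computation, as passing between a group and its opposite (or, equivalently, between left and right cosets) preserves indices of subgroups.
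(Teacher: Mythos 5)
Your proof is correct and follows essentially the same route as the paper's: identify $\Aut(G)$ with $\Gamma$ acting by right-multiplication, observe that the orbits of a subgroup $\Gamma'$ are exactly the left cosets of $\Gamma'$ in $\Gamma$, and conclude that finitely many orbits is the same as finite index. Your extra remark about left versus right cosets is fine but not needed, since the paper defines the index equivalently via left or right cosets.
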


\begin{proof}
Let $\Gamma$ be a finitely generated infinite group and $S$ be a
    finite set of generators of $\Gamma$ such that the automorphism group of the Cayley graph
    $G=\Cay(\Gamma,S)$ is equal to $\Gamma$. Let $\Gamma'$
    be a  subgroup of $\Aut(G)=\Gamma$ acting quasi-transitively on
    $V(G)=\Gamma$. The finitely many orbits of $V(G)=\Gamma$ under the
    action of $\Gamma'$ are precisely the left cosets of
    $\Gamma'$. It follows that
    $\Gamma'$ has finite index in $\Gamma=\Aut(G)$.
\end{proof}

As before, we obtain the following immediate corollary, which shows
that the notions of periodicity and strong periodicity
are equivalent in the case of graphical
regular representations of groups.

\begin{corollary}\label{cor:ptosp}
   Let $G$ be a graphical
regular representation of a finitely generated group. Then every periodic
    coloring or orientation of $G$ is strongly periodic. 
  \end{corollary}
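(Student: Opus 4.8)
The plan is to derive Corollary~\ref{cor:ptosp} as a direct consequence of Lemma~\ref{lem:ptosp}, exactly in the way that Corollary~\ref{cor:sptop} was obtained from Lemma~\ref{lem:sptop}. First I would let $G$ be a graphical regular representation of a finitely generated group, and let $c$ be a periodic coloring (or orientation) of $G$. By definition of periodicity, the subgroup $\Gamma'$ of $\Aut(G)$ consisting of all color-preserving (resp.\ orientation-preserving) automorphisms of $G$ acts quasi-transitively on $V(G)$.

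The key step is then to apply Lemma~\ref{lem:ptosp} to this subgroup $\Gamma'$: since $\Gamma'$ is a subgroup of $\Aut(G)$ acting quasi-transitively on $V(G)$, and $G$ is a graphical regular representation, the lemma immediately gives that $\Gamma'$ has finite index in $\Aut(G)$. But $\Gamma'$ having finite index in $\Aut(G)$ is precisely the definition of $c$ being strongly periodic. Hence every periodic coloring or orientation of $G$ is strongly periodic, as claimed.

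There is essentially no obstacle here, since the content is entirely packaged in the preceding lemma; the only thing to be careful about is matching the hypotheses. Concretely, I would verify that the set of color-preserving (resp.\ orientation-preserving) automorphisms genuinely forms a subgroup of $\Aut(G)$ (it is closed under composition and inverses, and contains the identity), so that Lemma~\ref{lem:ptosp} applies to it. With that observation in place, the corollary follows in one line from the lemma, treating the coloring and orientation cases uniformly since both reduce to quasi-transitivity of the relevant automorphism subgroup. Together with Corollary~\ref{cor:sptop}, this establishes the announced equivalence of periodicity and strong periodicity for graphical regular representations.
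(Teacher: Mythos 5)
Your proposal is correct and matches the paper's argument exactly: the paper also obtains Corollary~\ref{cor:ptosp} by applying Lemma~\ref{lem:ptosp} to the subgroup of color-preserving (or orientation-preserving) automorphisms, which acts quasi-transitively by the definition of periodicity, yielding finite index in $\Aut(G)$ and hence strong periodicity.
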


 Next, we discuss an observation
  on the minimum number of colors in a periodic proper vertex-coloring of a graph
  $G$ (when such a periodic proper vertex-coloring exists). This is
  connected to classical results and techniques in distributed computing.

   \subsection*{Color reduction} The maximum degree of a graph $G$ is
   denoted by $\Delta(G)$. Every locally finite
   quasi-transitive graph has bounded maximum degree. Using a
   classical technique from distributed computing, we show that every periodic proper vertex-coloring of a graph $G$
   with more than $\Delta(G)+1$ colors can be turned into a periodic proper
   vertex-coloring of $G$ with one less color, and thus after several
   iterations into a periodic proper
   vertex-coloring of $G$ with  $\Delta(G)+1$ colors.

  \begin{lemma}\label{lem:cr}
  Let $G$ be a locally finite graph with a periodic proper vertex-coloring. Then $G$ has a periodic proper vertex-coloring with at most $\Delta(G)+1$ colors.
  \end{lemma}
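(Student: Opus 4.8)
The plan is to follow the standard one-color reduction from distributed computing, taking care that the reduction step is \emph{canonical} so that it respects the symmetry of $G$. First I would note that since $G$ admits a periodic proper coloring it is quasi-transitive, and being locally finite it then has bounded degree, so $\Delta(G)$ is finite; moreover every periodic coloring uses only finitely many colors. Thus it suffices to show that a periodic proper coloring with $k\ge\Delta(G)+2$ colors can be converted into a periodic proper coloring with $k-1$ colors, and then to iterate.

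Let $c\colon V(G)\to\{1,\dots,k\}$ be such a coloring and let $\Gamma\le\Aut(G)$ be its group of color-preserving automorphisms, which acts quasi-transitively on $V(G)$ by hypothesis. Since $c$ is proper, the top color class $c^{-1}(k)$ is an independent set. I would define a new coloring $c'$ by leaving every vertex $v$ with $c(v)<k$ unchanged, and recoloring each $v$ with $c(v)=k$ by the smallest color in $\{1,\dots,\Delta(G)+1\}\subseteq\{1,\dots,k-1\}$ that does not appear on any neighbor of $v$ under $c$. Such a color exists because $v$ has at most $\Delta(G)$ neighbors, and the resulting $c'$ is proper: two recolored vertices are never adjacent (the top class is independent), and every recolored vertex is by construction assigned a color missing from its $c$-neighborhood, hence distinct from all of its neighbors.

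The crucial point is that this recoloring is canonical, and therefore $\Gamma$-invariant. Concretely, I claim that every $\phi\in\Gamma$ preserves $c'$. For a vertex $v$ with $c(v)<k$ this is immediate, since there $c'$ agrees with the $\Gamma$-invariant coloring $c$. For a vertex $v$ with $c(v)=k$, the automorphism $\phi$ maps the neighborhood of $v$ bijectively onto the neighborhood of $\phi(v)$ while preserving $c$, so the set of colors appearing on the neighbors of $\phi(v)$ equals the set appearing on the neighbors of $v$; the two greedy choices consequently coincide, giving $c'(\phi(v))=c'(v)$. Hence $\Gamma$ is contained in the group of color-preserving automorphisms of $c'$, so this latter group also acts quasi-transitively on $V(G)$, and $c'$ is periodic.

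Finally, I would iterate this single-color reduction. Each step strictly decreases the number of colors while preserving both properness and periodicity, and the process terminates once $\Delta(G)+1$ colors remain. As the initial number of colors is finite, finitely many steps suffice, yielding a periodic proper coloring of $G$ with at most $\Delta(G)+1$ colors. The only genuine subtlety, and the step I would be most careful about, is the invariance argument above: everything hinges on the recoloring rule depending solely on data that the automorphisms in $\Gamma$ preserve (the graph structure together with the coloring $c$), which is precisely why a canonical greedy choice, rather than an arbitrary one, is required.
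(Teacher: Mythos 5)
Your proof is correct and follows essentially the same approach as the paper: greedily recolor the top (independent) color class by the smallest color missing from each vertex's neighborhood, observe that this canonical rule is preserved by every automorphism preserving $c$ (the paper phrases this via orbits, you phrase it via invariance of neighborhood color sets, which is the same argument), and iterate. No gaps.
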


  \begin{proof}
As $G$ is quasi-transitive and locally finite, it has finite maximum degree $\Delta=\Delta(G)$. Consider a periodic proper vertex-coloring $c$ of $G$ with $N>\Delta+1$ colors (call them $1,\ldots,N$). Consider the set $S$ of vertices colored with color $N$ (and note that $S$ is an independent set in $G$). The set $S$ is divided into finitely many orbits $S_1,\ldots,S_k$ under the action of the group of color-preserving automorphisms of $G$. For each vertex $v\in S$, assign to $v$ the smallest color from the set $1,\ldots,N$ that does not appear in its neighborhood, and for each vertex $v\not\in S$, leave the color of $v$ unchanged. Let $c'$ be the resulting vertex-coloring of $G$. Since $N>\Delta+1$, for each vertex of $S$, the smallest color $c'(v)$ that does not appear in its neighborhood is an element of $1,\ldots,N-1$, so $c'$ uses at most $N-1$ colors. Moreover, for each $1\le i \le k$, all the vertices of $S_i$ are recolored with the same color (as all the vertices of $S_i$ see the same set of colors in their neighborhood), so every automorphism of $G$  preserving $c$ also preserves $c'$, and thus $c'$ is periodic.  Finally, since $S$ is an independent set, the vertex-coloring $c'$ is proper (each vertex has been recolored with a color distinct from that of its neighbors, and no two adjacent vertices have been recolored in the process).

This shows that given a periodic proper vertex-coloring of $G$ with $N>\Delta+1$ colors, we can produce a periodic proper vertex-coloring of $G$ with at most $N-1$ colors. Iterating this procedure at most $N-\Delta-1$ times, we obtain a periodic proper vertex-coloring of $G$ with at most $\Delta+1$ colors, as desired.
  \end{proof}

  \begin{remark}
     In distributed computing, much faster procedures reducing the number of colors than that of the proof of Lemma \ref{lem:cr} are usually needed. Together with the color reduction technique described above, a classical algorithm of Cole and Vishkin~\cite{CV86} can be used to produce a periodic proper $(\Delta+1)$-vertex-coloring of $G$ in $\Delta^{O(\Delta)}+\log^* N$ iterations starting from a periodic proper $N$-vertex-coloring of $G$, compared to the $N-\Delta-1$ iterations needed in the proof of Lemma \ref{lem:cr}. The idea is to view the colors $1,\ldots,N$ as $O(\log N)$-bit words, and to record, for each vertex $v$ and neighbor $u$ of $v$, a bit in the color of $v$ that differs from the corresponding bit in the color of $u$. Using this, we can obtain a periodic proper $O(\log N)$-vertex-coloring of $G$ in a single iteration, and a periodic proper $\Delta^{O(\Delta)}$-vertex-coloring in $\log^* N$ iterations.
 \end{remark}

 \subsection*{Basic examples of periodic colorings and orientations.}
 We conclude this section of preliminary results with two simple lemmas describing special cases where Problems \ref{pro:4} and \ref{pro:5} have positive answers. 

\begin{lemma}
    Let $G$ be a connected quasi-transitive bipartite graph (not necessarily locally finite). Then, $G$ has a periodic proper vertex-coloring with 2 colors. 
\end{lemma}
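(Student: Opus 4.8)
The plan is to use the canonical bipartition of $G$ as our $2$-coloring and then argue that the color-preserving automorphisms form a finite-index subgroup of $\Aut(G)$. Since $G$ is connected and bipartite, fix a vertex $v_0$ and let $A$ (resp.\ $B$) be the set of vertices at even (resp.\ odd) distance from $v_0$ in $G$; connectedness guarantees these distances are finite, and bipartiteness guarantees that $(A,B)$ is a bipartition with no edge inside $A$ or inside $B$. The crucial point is that this bipartition is the \emph{unique} one up to swapping $A$ and $B$: in any bipartition, two vertices lie on the same side if and only if they are joined by a walk of even length, a condition that does not depend on the chosen bipartition. I would then color every vertex of $A$ with color $1$ and every vertex of $B$ with color $2$, which is clearly a proper $2$-coloring.

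Next I would show that the color-preserving automorphisms form a subgroup of index at most $2$ in $\Aut(G)$. By uniqueness of the bipartition, every automorphism $\varphi\in\Aut(G)$ either fixes the two sides setwise (i.e.\ $\varphi(A)=A$ and $\varphi(B)=B$) or swaps them ($\varphi(A)=B$ and $\varphi(B)=A$). The assignment sending $\varphi$ to $0$ in the first case and to $1$ in the second is then a group homomorphism $\Aut(G)\to\mathbb{Z}/2\mathbb{Z}$ whose kernel is exactly the group $\Gamma$ of color-preserving automorphisms. Hence $\Gamma$ has index $1$ or $2$ in $\Aut(G)$.

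Finally, since $G$ is quasi-transitive and $\Gamma$ is a finite-index subgroup of $\Aut(G)$, Lemma \ref{lem:sptop} (whose proof makes no use of local finiteness) shows that $\Gamma$ acts quasi-transitively on $V(G)$. This is precisely the statement that the $2$-coloring is periodic, which completes the argument. I do not expect a serious obstacle here: the only points requiring care are the uniqueness of the bipartition and the verification that preserving-versus-swapping the sides defines a homomorphism to $\mathbb{Z}/2\mathbb{Z}$, and both are elementary facts that rely only on the connectedness of $G$ and not on any local finiteness assumption.
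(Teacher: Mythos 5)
Your proof is correct, and it reaches the conclusion by a genuinely different final step than the paper. The coloring itself is the same: both you and the paper take the canonical bipartition of the connected bipartite graph and color the two sides $1$ and $2$, and both arguments hinge on the fact that automorphisms preserve the parity of distances, so every automorphism either fixes the two sides setwise or swaps them. The paper then finishes directly: it notes that any automorphism sending some vertex of color $i$ to a vertex of color $i$ is color-preserving, and exhibits the orbits of the color-preserving subgroup explicitly as the finitely many sets $V_j^{(i)}$ (each $\Aut(G)$-orbit intersected with each side of the bipartition). You instead package the preserve-or-swap dichotomy into a homomorphism $\Aut(G)\to\mathbb{Z}/2\mathbb{Z}$ whose kernel is the color-preserving subgroup $\Gamma$, conclude that $\Gamma$ has index at most $2$ in $\Aut(G)$, and close by invoking Lemma \ref{lem:sptop}. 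Your route proves something slightly stronger along the way --- in the paper's terminology, the coloring is \emph{strongly} periodic, not just periodic --- and then descends to periodicity via the general finite-index lemma (equivalently Corollary \ref{cor:sptop}), whereas the paper's argument is self-contained and never needs the finite-index fact. You were also right to flag the one point requiring care in your approach: the statement being proved explicitly allows graphs that are not locally finite, and Lemma \ref{lem:sptop} is proved by pure coset and orbit counting, so applying it here is legitimate.
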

\begin{proof}
Let $\sg{X_1, X_2}$ be the bipartition of $G$ ($\sg{X_1, X_2}$ is unique since $G$ is connected). Let $c$ be the proper 2-coloring of $G$ where for $i=1,2$, vertices from $X_i$ are assigned color $i$. For each $i=1,2$, any two vertices of $X_i$ are at even distance apart. As automorphisms preserve distances (and thus the parity of distances), for every $i=1,2$, every automorphism of $G$ that maps some vertex colored $i$ to a vertex colored $i$ is color-preserving (i.e., all the vertices of $X_i$ are mapped to $X_i$ for every $i=1,2$).

Let $V_1, \ldots,V_k$ be the orbits of $V(G)$ under the action of $\Aut(G)$, and for each $1\le j \le k$ and each $i=1,2$, let $V_j^{(i)}$ be the set of vertices of $V_j$ colored $i$. The paragraph above implies that the orbits of $V(G)$ under the action of the subgroup of color-preserving automorphisms are precisely the sets  $V_j^{(i)}$, for $1\le j \le k$ and $i=1,2$. As this collection is finite, $c$ is periodic.
%
%
%
%
\end{proof}

We say that an automorphism $g$ of $G$ \emph{inverts an edge $uv\in E(G)$} if $g(u)=v$ and $g(v)=u$. 
It is straightforward to observe that for any subgroup $\Gamma$ of automorphisms that preserve some proper vertex-coloring or orientation of a graph $G$, $\Gamma$ does not contain any automorphism that inverts an edge of $G$. In the next lemma it is shown that this condition is also sufficient to prove the existence of a periodic orientation.
\begin{lemma}
\label{lem:edge-inversion}
    Let $G$ be a locally finite quasi-transitive graph. Then, $G$ has a periodic orientation if and only if there is a subgroup $\Gamma \subseteq \Aut(G)$ acting quasi-transitively such that no automorphism of $\Gamma$ inverts an edge of $G$. 
\end{lemma}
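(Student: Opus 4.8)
The plan is to prove the two implications separately. The forward direction is essentially the observation recorded just before the statement: if $G$ admits a periodic orientation $D$, then the subgroup $\Gamma\subseteq\Aut(G)$ of all orientation-preserving automorphisms acts quasi-transitively on $V(G)$ by definition of periodicity, and I claim that no $g\in\Gamma$ inverts an edge. Indeed, if some $g\in\Gamma$ satisfied $g(u)=v$ and $g(v)=u$ for an edge $uv$, then $g$ would map the unique arc of $D$ on $\{u,v\}$ (say $u\to v$) to the oppositely directed arc $v\to u$, contradicting that $g$ preserves $D$. Hence this $\Gamma$ witnesses the right-hand side.

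For the converse, suppose $\Gamma\subseteq\Aut(G)$ acts quasi-transitively on $V(G)$ and no element of $\Gamma$ inverts an edge. I would build a single $\Gamma$-invariant orientation $D$ orbit by orbit: for each orbit of $\Gamma$ on $E(G)$, fix a representative $e=uv$, orient it arbitrarily as $u\to v$, and then orient every edge $f$ in the orbit of $e$ by writing $f=g(e)$ for some $g\in\Gamma$ and declaring the orientation of $f$ to be $g(u)\to g(v)$. Carrying out this choice over all edge orbits yields an orientation $D$ of $G$.

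The crux is to check that this rule is well defined, and this is exactly where the no-inversion hypothesis is used. If $g(e)=h(e)$ for $g,h\in\Gamma$, then $h^{-1}g$ stabilizes the edge $e$ setwise; since no automorphism in $\Gamma$ inverts $e$, the element $h^{-1}g$ cannot swap $u$ and $v$, so it fixes both, giving $g(u)=h(u)$ and $g(v)=h(v)$. Thus the tail and head assigned to $f$ do not depend on the choice of $g$, and $D$ is well defined. By construction $D$ is $\Gamma$-invariant: any $g'\in\Gamma$ sends the arc $g(u)\to g(v)$ to $g'g(u)\to g'g(v)$, which is precisely the arc that $D$ assigns to $g'g(e)$. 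Hence $\Gamma$ is contained in the group of orientation-preserving automorphisms of the oriented graph $(G,D)$; as $\Gamma$ already acts quasi-transitively on $V(G)$, the larger orientation-preserving group has at most as many orbits and so acts quasi-transitively too, meaning that $D$ is periodic.

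I expect the only delicate point to be the well-definedness of the orientation on each orbit, which rests precisely on the fact that under the no-inversion assumption the setwise edge-stabilizer in $\Gamma$ coincides with the pointwise stabilizer; everything else is routine bookkeeping with the group action. Local finiteness and quasi-transitivity ensure that $\Gamma$ has only finitely many edge orbits, but this finiteness is not actually needed for the construction, since the orientation is defined and checked one orbit at a time.
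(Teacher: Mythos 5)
Your proof is correct and follows essentially the same route as the paper's: the forward direction is the same immediate observation, and the converse is the same orbit-by-orbit construction of a $\Gamma$-invariant orientation, with the no-inversion hypothesis guaranteeing a consistent choice of arc on each orbit. Your write-up just makes explicit the well-definedness check (setwise edge-stabilizers are pointwise) that the paper states more tersely, and correctly notes that finiteness of the number of edge orbits is not actually needed.
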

\begin{proof} 
    If $G$ has a periodic orientation, then no element of the automorphism group that preserves the orientation inverts an edge of $G$ (otherwise it would not preserve the orientation of that edge). Conversely, suppose $\Gamma \subseteq \Aut(G)$ acts quasi-transitively and is such that no element of $\Gamma$ inverts an edge of $G$. We will construct an orientation preserved by $\Gamma$. Let $E_1, \hdots, E_k$ be the orbits of $E(G)$ under $\Gamma$ (since $\Gamma$ acts quasi-transitively on $V(G)$ and $G$ is locally finite, $\Gamma$ also acts quasi-transitively on $E(G)$). Fix an edge $e_i \in E_i$ and an orientation $\overrightarrow{e_i}$ of $e_i$ for all $i = 1, \hdots, k$. For every $e_i' \in E_i$, since no element of $\Gamma$ inverts an edge of $G$, there is exactly one orientation $\overrightarrow{e_i'}$ of $e_i'$ so that an automorphism of $\Gamma$ maps $\overrightarrow{e_i}$ to $\overrightarrow{e_i'}$. Now, the fixed orientations of $e_1, \hdots, e_k$ define an orientation of $E(G)$ that is preserved by $\Gamma$. 
\end{proof}

A corollary of Lemma \ref{lem:edge-inversion} is that Problem \ref{pro:5} has a positive answer when $G$ is the Cayley graph of some finitely generated group $\Gamma$ with respect to a generating set containing no elements of order 2 (i.e., elements $s \neq 1_{\Gamma}$ such that $s^{-1}=s$).
  
\section{Periodic non-trivial colorings}\label{sec:example}

  A vertex-coloring of a graph $G$ is said to be \emph{trivial} if for each
  connected component $C$ of $G$, all the vertices of $C$ are assigned the
  same color. The vertex-coloring is said to be \emph{non-trivial}
  otherwise. A
  vertex-coloring of $G$ is trivial if and only if for every edge $uv$
  of $G$, $u$ and $v$ are assigned the same color. Hence, if $G$
  contains at least one edge, for every non-trivial coloring of $G$
  there is a pair $u,v$ of adjacent vertices which are assigned
  different colors. In particular, if $G$ contains at least one edge,
  then every proper vertex-coloring of $G$ is non-trivial.

  \begin{remark}
For non-trivial vertex-colorings, color reduction (as
described in the previous section) is much simpler. Given a non-trivial vertex-coloring $c$ with colors $1,\ldots,N$ we obtain a new coloring $c'$ as follows: for each connected component $C$, we consider the color of smallest index appearing in $C$ and rename it color 1. We then recolor the vertices of the other colors in the component with color 2.  The resulting coloring is non-trivial, and if $c$ is periodic then the resulting coloring is also periodic. It follows that a graph has a periodic non-trivial vertex-coloring if and only if it has a periodic non-trivial vertex-coloring with 2 colors.
  \end{remark}

  \medskip

  Recall that Problems
 \ref{pro:4} and \ref{pro:5}  ask whether every locally finite quasi-transitive graph
 has a periodic proper vertex-coloring and  a periodic orientation. We now
 give a negative answer for graphical
regular representations of  finitely generated infinite simple groups using
 the results from the previous section, relating periodicity and
 strong periodicity. For vertex-colorings, we show that even if the notion of
 proper coloring is replaced by the much weaker notion of
 non-trivial coloring, the answer to Problem
 \ref{pro:4} remains negative.

  \begin{lemma}\label{lem:2}
    Let $G=\Cay(\Gamma,S)$ be a graphical
regular representation of a finitely generated infinite simple group $\Gamma$, for some
finite set $S$ of generators.
  Then every periodic vertex-coloring of $G$ is
  trivial. Moreover, if $S$ contains an element of order 2, then $G$
  does not have a periodic orientation.
  \end{lemma}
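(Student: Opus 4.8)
The plan is to combine the equivalence of periodicity and strong periodicity for graphical regular representations (Corollary \ref{cor:ptosp}) with the fact that an infinite simple group has no proper finite-index subgroup (Corollary \ref{cor:2}). Together these force the relevant subgroup of structure-preserving automorphisms to be the whole of $\Gamma = \Aut(G)$, and the statements then follow from simple observations about the right-multiplication action.

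First I would handle the coloring claim. Given a periodic vertex-coloring $c$, let $\Gamma'$ be its subgroup of color-preserving automorphisms. By Corollary \ref{cor:ptosp}, $c$ is strongly periodic, so $\Gamma'$ has finite index in $\Aut(G)=\Gamma$; since $\Gamma$ is infinite simple, Corollary \ref{cor:2} forces $\Gamma' = \Gamma$. As $\Gamma$ acts transitively on $V(G) = \Gamma$ by right-multiplication, any two vertices are exchanged by a color-preserving automorphism, so $c$ is constant. A Cayley graph being connected, $c$ is therefore trivial.

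Next I would treat the orientation claim under the hypothesis that some $s \in S$ has order $2$. Assuming for contradiction that $G$ has a periodic orientation, the same two corollaries force the orientation-preserving subgroup to equal $\Gamma$, i.e.\ every automorphism of $G$ preserves the orientation. It then suffices to produce one automorphism that cannot. Since $s = s^{-1} \in S$, the pair $\{1_\Gamma, s\}$ is an edge, and right-multiplication $R_s \colon g \mapsto gs$---an element of $\Aut(G) = \Gamma$---maps $1_\Gamma \mapsto s$ and $s \mapsto s^2 = 1_\Gamma$, hence inverts this edge and cannot preserve its orientation, the desired contradiction. Alternatively, one could invoke Lemma \ref{lem:edge-inversion} together with Lemma \ref{lem:ptosp}: a quasi-transitive orientation-preserving subgroup would have finite index, hence equal $\Gamma$, which contains the edge-inverting element $R_s$.

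The argument is mostly a direct assembly of the preliminary results, so there is no serious obstacle; the one point needing care is the verification that $R_s$ inverts an edge, which uses precisely the order-$2$ hypothesis $s^2 = 1_\Gamma$. The conceptual heart---where simplicity does all the work---is the collapse of every finite-index subgroup to the full group via Corollary \ref{cor:2}.
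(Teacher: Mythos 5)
Your proof is correct and takes essentially the same approach as the paper: both combine the graphical-regular-representation fact that periodic implies strongly periodic (Lemma \ref{lem:ptosp}/Corollary \ref{cor:ptosp}) with Corollary \ref{cor:2} to force the color- or orientation-preserving subgroup to be all of $\Gamma=\Aut(G)$, then use vertex-transitivity (for colorings) and an edge-inverting automorphism coming from the order-2 generator (for orientations) to conclude. Your explicit verification that $R_s$ inverts the edge $\{1_\Gamma,s\}$ is, if anything, a slightly more careful rendering of the paper's same argument.
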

  
  \begin{proof}
    For the sake of contradiction, consider some non-trivial vertex-coloring $c$ of $G$ which is periodic, and denote by $\Gamma'$
    the subgroup of $\Aut(G)=\Gamma$ of automorphisms of $G$ preserving the colors
    of the vertices of $G$. By Lemma~\ref{lem:ptosp}, $\Gamma'$ has finite index in $\Gamma$.

    Since $c$ is non-trivial, there exist vertices $u,v$ of $G$ such that $c(u)\ne c(v)$. As $G$ is a Cayley graph, it
    is vertex-transitive and thus there is an automorphism $g\in
    \Aut(\Gamma)$ that maps $u$ to $v$. This automorphism $g$ does not
    preserve the colors of the vertices, so this shows that $\Gamma'$
    must be a proper
    subgroup of $\Gamma$. By Corollary \ref{cor:2}, this contradicts
    the fact that $\Gamma'$ has finite index in $\Gamma$.

    \medskip

    It now remains to prove the second part of the statement. So assume
    that $S$ contains some element $s$ of order 2. Consider any $g\in
    \Gamma=V(G)$, and its neighbor $h=sg$ in $G$.  As $s$ has order 2,
    the corresponding automorphism of $G$ maps $g$ to $h$ and $h$ to
    $g$, so it maps the pair $(g,h)$ to $(h,g)$. If $G$ has a
    periodic orientation, then the subgroup of orientation-preserving automorphisms of $G$ does not contain $s$, and is thus a
    proper subgroup of $\Gamma$. Using Corollary \ref{cor:2} again, the same argument as in the previous
    paragraph then implies that no such periodic orientation
    of $G$ exists.
  \end{proof}

  \begin{remark}
    As any proper vertex-coloring of a graph containing at least one edge is non-trivial, Lemma \ref{lem:2} implies in particular that no graph satisfying the assumptions of the lemma has a periodic proper vertex-coloring,
    and thus any graph satisfying the conditions of
  Lemma \ref{lem:2} provides a negative answer to Problem \ref{pro:4}. 
  \end{remark}

  It remains to prove that a Cayley graph satisfying the conditions of
  Lemma \ref{lem:2} exists. A specific example comes from R.\ Thompson's group
  $V$, which is a finitely presented infinite simple group. We omit the precise definition of $V$ in this paper, and instead refer the interested reader to the lecture notes \cite{CFP96} for more details on R.\ Thompson's groups. It was proved by Leemann and de la Salle \cite{LS22a} that any finitely generated group which is not virtually abelian has a graphical regular representation (see also \cite{God80} for related result in the finite case). It immediately follows that $V$ has a graphical regular representation. However, for our applications we need some control on the order of the elements in the generating set. This is achieved in the following lemma, which uses earlier results of Leemann and de la Salle \cite{LS21} on graphical regular representations, together with specific presentations of R.\ Thompson's group $V$ discovered by Bleak and Quick~\cite{BQ17}.
  
  \begin{lemma}\label{lem:V}
R.\ Thompson's group $V$ has two finite sets $S$ and $T$ of generators such that $S$ contains an element of order 2, $T$ does not contain any element of order 2, and both $\Cay(V,S)$ and $\Cay(V,T)$ are graphical regular representations of $V$.
  \end{lemma}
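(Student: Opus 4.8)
The plan is to produce both generating sets explicitly from the permutation presentations of Bleak and Quick \cite{BQ17}, in which every generator is a concrete prefix-exchange map whose order can be read off directly, and then to certify the graphical regular representation property via the constructive results of Leemann and de la Salle \cite{LS21}. The underlying reduction is the standard one: right multiplication gives a regular (hence free and transitive) action of $V$ on $\Cay(V,S)$, so $V\subseteq\Aut(\Cay(V,S))$ always, and $\Cay(V,S)$ is a graphical regular representation precisely when the stabilizer of $1_V$ in $\Aut(\Cay(V,S))$ is trivial. The role of \cite{LS21} is to turn this into a condition one can actually arrange: it provides, for a group such as $V$, a way to certify that a suitably chosen finite generating set yields a graphical regular representation, with enough freedom in the choice of generators to prescribe their orders. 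The structural hypotheses it requires of $V$ (that $V$ be, in particular, not virtually abelian) are immediate, since $V$ is infinite, non-abelian and simple: a finite-index abelian subgroup would be a proper finite-index subgroup, contradicting Corollary~\ref{cor:2}.

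For $S$ the construction is straightforward: since $V$ contains involutions (for instance a map swapping two disjoint cones), any finite generating set can be taken to contain an element of order $2$, and feeding such a generating set into the criterion of \cite{LS21} produces the desired $S$, still containing that involution. For $T$ I would instead start from a finite generating set of $V$ all of whose elements have order different from $2$; such a set exists because $V$ has elements of infinite order, and one can clear any involution $q$ from a generating set by replacing it with $qw$ for a fixed infinite-order element $w$ that is also retained (so that $q=(qw)w^{-1}$ lies in the subgroup generated), choosing $w$ generically so that each resulting $qw$ also has order different from $2$. Applying the same criterion to this set, while insisting that every generator it adjoins also have order different from $2$, yields $T$.

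The main obstacle is precisely this last point for $T$: the criterion of \cite{LS21} may need to enlarge the starting generating set in order to kill the nontrivial automorphisms fixing $1_V$, and one must check that this enlargement can always be carried out using only elements of order different from $2$. This is exactly the \emph{control on the order of the elements} that the plain existence statement of \cite{LS22a} does not supply, and it is where the finer, constructive results of \cite{LS21} are needed; the explicitness of the Bleak--Quick generators is what lets one verify, for the concrete elements involved, both that they generate $V$ and that they meet the order constraints. By contrast, the $S$ case is unobstructed, since there involutions are permitted among the added generators.
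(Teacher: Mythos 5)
Your overall strategy coincides with the paper's: take the explicit Bleak--Quick generators of $V$, whose orders are known, and extend them to generating sets giving graphical regular representations via the constructive results of Leemann and de la Salle \cite{LS21}. However, as written, your argument stops exactly at the decisive point for $T$. You correctly identify that everything hinges on whether the enlargement step of \cite{LS21} can be carried out without ever adjoining an involution, and then you \emph{assert} (rather than verify) that the ``finer, constructive results'' of \cite{LS21} supply this. That assertion is the entire content of the lemma in the $T$ case; without pinning down a specific statement, what you have is a plan, not a proof. The paper closes this gap by invoking \cite[Theorem 9]{LS21}, which extends an involution-free generating set (here the two generators $u,v$ of \cite[Theorem 1.3]{BQ17}) to a finite involution-free generating set $T$ with $\Cay(V,T)$ a graphical regular representation, and for $S$ it invokes \cite[Corollary 11]{LS21}.

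Two smaller inaccuracies point the same way. First, the hypothesis you verify for $V$ --- not virtually abelian, via Corollary \ref{cor:2} --- is the hypothesis of the existence theorem of \cite{LS22a}, which provides no control on the orders of the generators; the constructive results of \cite{LS21} that you actually need have different hypotheses (the paper checks that $V$ has rank 2 and contains elements of arbitrarily large order for Corollary 11), and these are what should be verified. Second, your ad hoc construction of an involution-free generating set --- replacing an involution $q$ by $qw$ for a ``generic'' infinite-order $w$ --- is both unjustified (you give no argument that a single $w$ exists making every such $qw$ of order different from $2$) and unnecessary: \cite[Theorem 1.3]{BQ17}, which you already have in hand, directly provides a two-element generating set of $V$ containing no element of order~2.
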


  \begin{proof}
  Bleak and Quick~\cite[Theorem 1.2]{BQ17} gave a presentation of $V$ with three generators $a,b,c$, two of them having order 2. Using the fact that $V$ has rank 2 and contains elements of arbitrarily large order,  \cite[Corollary 11]{LS21} by Leemann and de la Salle implies that the set $\{a,b,c\}$ of  generators of $V$ is a subset of a finite set $S$ of generators of $V$ such that $\Cay(V,S)$ is a graphical regular representations of $V$.

  Bleak and Quick~\cite[Theorem 1.3]{BQ17} also gave a presentation of $V$ with two generators $u,v$, none of them having order 2. Using \cite[Theorem 9]{LS21}, this implies that there is a finite set $T$ of generators of $V$ containing $u$ and $v$, such that $T$ does not contain any element of order 2 and $\Cay(V,T)$ is a graphical regular representations of $V$, as desired.
  \end{proof}
  
  Using Lemma \ref{lem:2}, this immediately implies the following.

  \begin{corollary}
      R.\ Thompson's group $V$ has a finite generating set $S$ such that $\Cay(V,S)$ has no periodic orientation and no periodic non-trivial vertex-coloring.
  \end{corollary}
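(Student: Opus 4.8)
The plan is to combine Lemma~\ref{lem:V} and Lemma~\ref{lem:2} directly, with essentially no additional work needed. Lemma~\ref{lem:V} provides a finite generating set $S$ of R.\ Thompson's group $V$ such that $S$ contains an element of order~$2$ and $\Cay(V,S)$ is a graphical regular representation of $V$. This $S$ is exactly the generating set we take in the statement of the corollary.

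First I would record that $V$ satisfies the hypotheses of Lemma~\ref{lem:2}: it is a finitely generated infinite simple group (finitely presented, hence finitely generated, and infinite simple as recalled above), and by our choice $\Cay(V,S)$ is a graphical regular representation of it. Then I would apply the first part of Lemma~\ref{lem:2} to conclude that every periodic vertex-coloring of $\Cay(V,S)$ is trivial; in particular $\Cay(V,S)$ has no periodic \emph{non-trivial} vertex-coloring. Next, since $S$ was chosen to contain an element of order~$2$, the second part of Lemma~\ref{lem:2} applies and yields that $\Cay(V,S)$ has no periodic orientation. Together these two conclusions are precisely the assertion of the corollary.

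Since all the substantive content has already been established in Lemmas~\ref{lem:V} and~\ref{lem:2}, there is no real obstacle in the argument itself; the only point requiring care is to select the correct one of the two generating sets produced by Lemma~\ref{lem:V}, namely $S$ (the one containing an element of order~$2$) rather than $T$, because the element of order~$2$ is exactly what is needed to rule out a periodic orientation through the second part of Lemma~\ref{lem:2}. The genuine difficulty lies upstream, in establishing Lemma~\ref{lem:V} (which rests on the graphical-regular-representation results of Leemann--de la Salle and on the Bleak--Quick presentations of $V$) and in the infiniteness and simplicity of $V$, all of which we are entitled to assume at this stage.
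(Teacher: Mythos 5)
Your proof is correct and matches the paper's own argument exactly: the paper also derives the corollary by taking the generating set $S$ (the one containing an element of order $2$) from Lemma~\ref{lem:V} and applying both parts of Lemma~\ref{lem:2}. Your remark that one must pick $S$ rather than $T$ is the only point of care, and you handled it correctly.
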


  \begin{remark} We have chosen R. Thompson's group $V$ for simplicity but it should be mentioned that other families of infinite finitely generated simple groups provide even more striking negative answers to Problem \ref{pro:4} (and Problem \ref{pro:5}, with some additional work). For instance, if we consider again the setting of non-trivial vertex-coloring, we can take a graphical regular representation $G$ of a \emph{Tarski monster group} of exponent $p$ (see \cite[Theorem 14]{LS21} for the existence of such a representation for sufficiently large $p$). This group is infinite, finitely generated, and has the property that any proper subgroup is isomorphic to the cyclic group of order $p$. As in the proof of Lemma \ref{lem:2}, observe that for any non-trivial vertex-coloring of $G$, the subgroup of $\Aut(G)$ of color-preserving automorphisms of $G$ must be a proper subgroup of $\Aut(G)$, and thus it must be finite (which is  stronger than being of infinite index).
  
      It was pointed out to us by Emmanuel Jeandel that another infinite finitely generated simple group, constructed by Osin \cite{Osi10}, provides a negative answer to Problem \ref{pro:4} which is even more remarkable. The group $\Gamma$ constructed by Osin is an infinite, finitely generated simple group  with the property that any two non-identity elements are conjugate. Consider any graphical regular representation $G=\Cay(\Gamma,S)$ of such a group, for some finite generating set $S$ (the fact that such a representation exists follows directly from \cite[Theorem 1.1]{LS22a}, since infinite simple groups are not virtually abelian), and a proper vertex-coloring $c$ of $G$. Let $g\in \Aut(G)=\Gamma$ be any non-trivial automorphism of $G$, and let $s\in S$. As $g$ and $s$ are conjugate, there exists $h\in \Gamma$ such that $gh=hs$. So the automorphism $g$ of $G$ maps  $h$ to its neighbor $hs$ in $G$. As $c(h)\ne c(hs)$, $g$ is not color-preserving. It follows that the only color-preserving automorphism of 
      $G$ is the identity. Hence, $G$ is an infinite Cayley graph with the property that for every proper vertex-coloring  of $G$, the subgroup of color-preserving automorphisms of $G$ is trivial! We note that this seems to only apply to proper vertex-colorings (and not to the more general setting of non-trivial vertex-colorings).
  \end{remark}



  




  \subsection*{Alternative version (without graphical regularity)}
  In the previous paragraphs we have used specific graphical
regular representations (that is, specific Cayley graphs) of  simple groups. We now explain how to turn any Cayley graph of a finitely generated infinite simple group into a quasi-transitive graph with no periodic proper vertex-coloring (and without periodic orientation, with an additional assumption). This does not require any result on graphical regularity, but the downside is that the resulting examples are only quasi-transitive  (instead of being Cayley graphs). 

\medskip
  
  Take \emph{any} finitely generated infinite simple group $\Gamma$ and \emph{any} finite set $S$ of generators (if we want to give a negative answer to Problem \ref{pro:5}, we only ask that $S$ contains an element of order 2). Write $S=s_1,\ldots,s_k$ and consider the graph $G$ obtained from $\Cay(\Gamma,S)$ by doing the following: for any vertex $g\in \Gamma$ and element $s_i\in S$ we add a path of length (number of edges) $3i$ between $g$ and $gs_i$, say $P_{g,s_i}=v_0,v_1,\ldots, v_{3i}$ with $v_0=g$ and $v_{3i}=gs_i$, and we add a new vertex whose unique neighbor is $v_{3i-1}$. We also keep in $G$ the (simple) edges of from $\Cay(\Gamma, S)$. Note that $G$ is quasi-transitive, and we claim that $\Aut(G)=\Gamma$. To see this, observe first that for the labelled, directed version $\hat{G}$ of $\Cay(\Gamma,S)$ where arcs are labelled with the corresponding generators (i.e., in $\hat{G}$ we add an arc labelled $s$ from $g$ to $gs$ for any $g\in \Gamma$ and $s\in S$), we have $\Aut(\hat{G})=\Gamma$. Observe now that the paths $P_{g,s_i}$ in $G$ force every automorphism of $G$ to coincide with an automorphism of $\hat{G}$ on $V(\hat{G})$.

\medskip

  A proof along the lines of that of Lemma \ref{lem:2} now shows that
  $G$ has no periodic proper vertex-coloring, and if $S$ contains an
  element of order 2, then $G$ has no periodic orientation. On the other hand, $G$ has a periodic non-trivial
  vertex-coloring (obtained by coloring all the vertices of the
  original Cayley graph $\Cay(\Gamma,S)$ with color 1, and all the
  newly added internal vertices of the paths with color 2). More
  generally, any quasi-transitive graph which is not vertex-transitive
  has a periodic non-trivial vertex-coloring.

\section{Infinitely-ended graphs and finite treewidth}\label{sec:tw}

Recall from Section \ref{sec:prel} that the number of ends of a
quasi-transitive graph is either $0,1,2$ or $\infty$. The
graphs presented in Section \ref{sec:example} providing negative answers
to Problems \ref{pro:4} and \ref{pro:5} are all quasi-isometric to finitely
generated infinite simple groups and are thus 1-ended \cite{Loss86}. 
It is thus natural to investigate whether we
can also construct examples with 2 or infinitely many ends (graphs
with 0 ends are finite, and therefore not of interest for these
problems). In this section we will construct $\infty$-ended
graphs providing  negative answers
to Problems \ref{pro:4} and \ref{pro:5}, and in the next section we
will show on the other hand that the answer to both problems is
positive for 2-ended graphs. More precisely, the negative examples we
will construct in this section have bounded treewidth, while all 
locally finite quasi-transitive 2-ended graphs have bounded
pathwidth, so this shows that graphs of bounded pathwidth and bounded treewidth behave very
differently in the context of our problems.

\subsection{Graphs of bounded treewidth}
\label{sec: example}
We now describe a simple construction of a quasi-transitive locally
finite graph of treewidth 2 that does not admit any periodic
orientation. This provides a negative answer to Problem \ref{pro:5}, and
thus also to Problems \ref{pro:4} and \ref{pro:pcplanar}, as the graph is planar. This example is a particular case
of a more general family of examples that we will describe in the next
subsection. We start by giving a self-contained proof that the 
graph does not admit a periodic orientation, and then we show more
generally that this holds for a wider family of examples.

\medskip

Consider an orientation $\protect\overrightarrow{T}$ of the infinite 3-regular
tree $T$ where each vertex has in-degree 2 and out-degree 1. Next, for
every arc $(u,v)$ in $\protect\overrightarrow{T}$, replace $(u,v)$ by a (non-oriented)
path $u-r_{uv}-s_{uv}-v$, and add a new vertex $t_{uv}$ adjacent to $s_{uv}$ only. Finally, for each original vertex $u$ of
$\protect\overrightarrow{T}$, replace $u$ by a triangle  $\triangle_u=u_1u_2u_3$, where each $u_i$ is adjacent to a unique neighbor of~$u$ and where $u_1$ is the closest vertex from the unique
out-neighbor of $u$ in $\protect\overrightarrow{T}$. The resulting graph
is denoted by $G$ (see Figure \ref{fig: tree} for an illustration).

\begin{figure}[htb]
  \centering
  \includegraphics[scale=1]{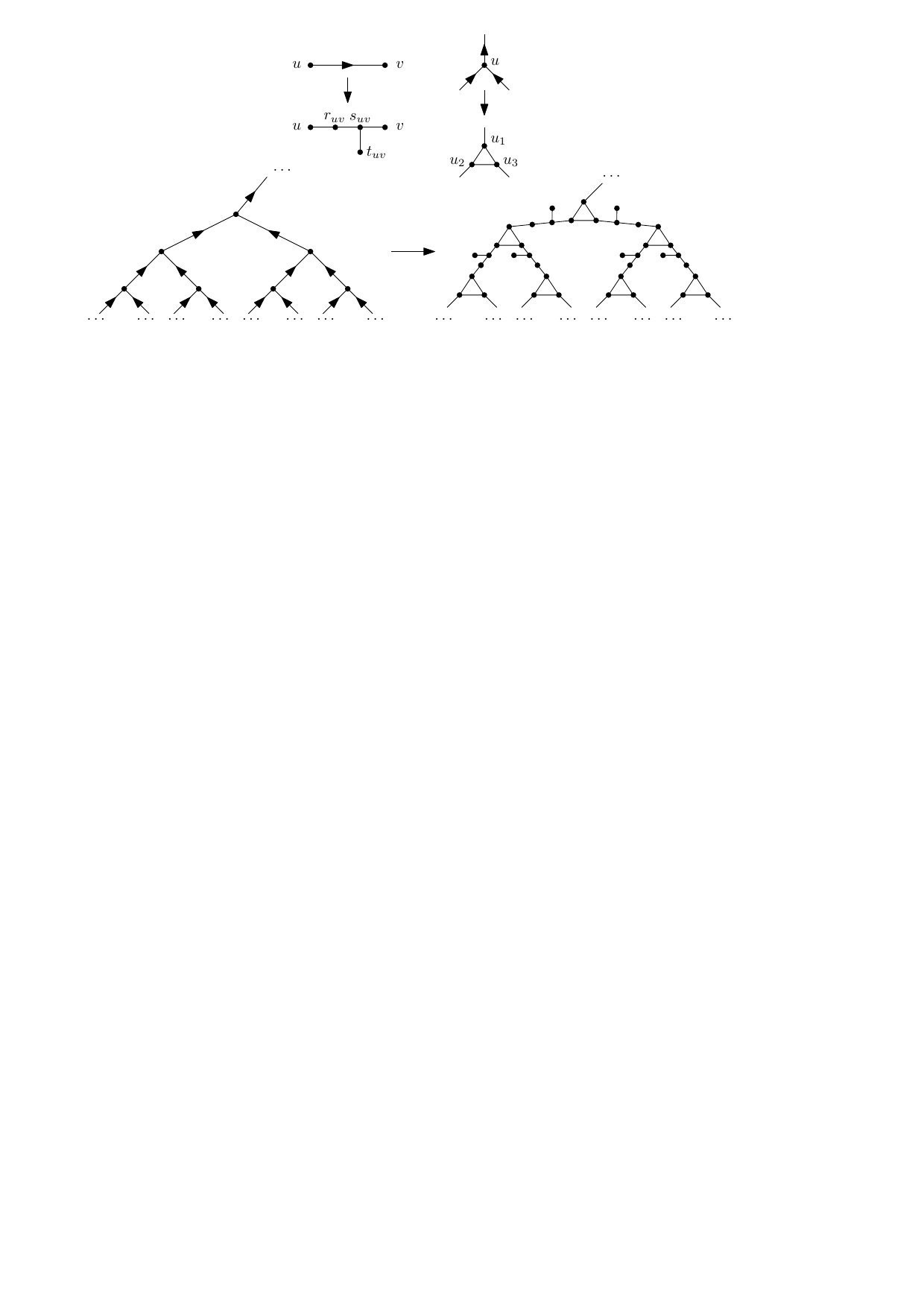}
  \caption{The orientation  $\protect\overrightarrow{T}$ of the infinite 3-regular
    tree  $T$ (left), and the resulting graph $G$ (right). The local
    modifications performed to construct $G$ from $\protect\overrightarrow{T}$
    are illustrated at the top.}
  \label{fig: tree} 
\end{figure}

It is easily seen that $G$ has treewidth at most 2 (and is planar).
The fact that $G$ is quasi-transitive easily follows from the
fact that $\protect\overrightarrow{T}$ is vertex-transitive, and that there is
a natural bijection between the 
automorphisms of $\protect\overrightarrow{T}$ and the automorphisms of
$G$: given an automorphism $g$ of $\protect\overrightarrow{T}$,
the corresponding automorphism of $G$ maps $u_1$ to $g(u)_1$ and $\{u_2,u_3\}$ to $\{g(u)_2,g(u)_3\}$ for any vertex $u\in V(\protect\overrightarrow{T})$, and $r_{uv},s_{uv}, t_{uv}$ to $r_{g(u)g(v)},s_{g(u)g(v)},
t_{g(u)g(v)}$ for any arc $(u,v)$ of  $\protect\overrightarrow{T}$. In the
other direction, observe that any automorphism $h$ of $G$ must map each
triangle $\triangle_u$ to some triangle $\triangle_{g(u)}$, and the
resulting map $g$  is an automorphism of $\protect\overrightarrow{T}$.

\begin{remark}\label{rem:halin}
  Every automorphism of  $\protect\overrightarrow{T}$ 
translates some 2-way infinite directed path or stabilizes some
vertex (this is a classical result of Tits \cite[Proposition~3.2]{Tits70}, but follows also easily from a result of Halin \cite[Theorem~7]{Halin73}). In the latter case, if
the automorphism is distinct from the identity, then there is a vertex
$v$ such that the two in-neighbors of $v$ are exchanged by the
automorphism. 
\end{remark}

This remark implies the following.

\begin{claim}
\label{clm: invert}
 For every subgroup $\Gamma$ of $\Aut(G)$ acting quasi-transitively on $V(G)$, there
 exists some automorphism $g\in \Gamma$ and some $u\in V(\protect\overrightarrow{T})$ such that $g(u_2)=u_3$ and $g(u_3)=u_2$.
\end{claim}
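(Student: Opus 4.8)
The plan is to transport everything to the oriented tree $\overrightarrow{T}$ and to reduce the claim to producing a single non-identity automorphism in $\Gamma$ that fixes a vertex of $\overrightarrow{T}$. Via the natural bijection $\Aut(G)\cong\Aut(\overrightarrow{T})$, the subgroup $\Gamma$ corresponds to a subgroup of $\Aut(\overrightarrow{T})$ that I will also denote by $\Gamma$; since $\Gamma$ permutes the triangles $\triangle_u$ and has finitely many orbits on $V(G)$, it has finitely many orbits on $V(\overrightarrow{T})$ as well, i.e.\ it acts quasi-transitively on $\overrightarrow{T}$. Now if some non-identity $g\in\Gamma$ stabilizes a vertex of $\overrightarrow{T}$, then by Remark~\ref{rem:halin} it exchanges the two in-neighbors of some vertex $w$; translating back to $G$, the corresponding automorphism fixes $\triangle_w$ together with $w_1$ (which faces the out-neighbor), and swaps $w_2$ and $w_3$ (which face the two in-neighbors). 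This is exactly the conclusion of the claim, so it suffices to produce a non-trivial vertex-stabilizing element in $\Gamma$.

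First I would record the key structural feature of the orientation. Since every vertex has out-degree $1$, following out-edges from any vertex yields a directed ray, and any two such rays eventually merge: for two adjacent vertices their forward rays share a common tail, and this propagates along paths in $\overrightarrow{T}$. Hence all forward rays converge to a single canonical end $\omega$, and every automorphism of $\overrightarrow{T}$, preserving the orientation, must fix $\omega$. In particular the out-edge at each vertex is its edge toward $\omega$ and the two in-edges are its two edges away from $\omega$, so $\overrightarrow{T}$ is just the $3$-regular tree rooted at the end $\omega$, with one parent and two children per vertex.

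Next, assume for contradiction that no non-identity element of $\Gamma$ fixes a vertex. By Remark~\ref{rem:halin}, every non-identity $g\in\Gamma$ then translates a directed bi-infinite path, and since forward edges point toward $\omega$, the forward end of each such axis is $\omega$. The main point is now to exploit this common fixed end: the horocyclic level (Busemann) function of $\omega$ yields a homomorphism $\tau\colon\Gamma\to\mathbb{Z}$ recording the signed displacement toward $\omega$, and $g\in\ker\tau$ exactly when $g$ preserves levels, i.e.\ is elliptic (a vertex-stabilizer) or trivial, because a non-trivial translation with axis-end $\omega$ necessarily shifts levels. Under our assumption $\ker\tau$ is trivial, so $\Gamma$ embeds into $\mathbb{Z}$ and is therefore infinite cyclic, generated by a single translation $g$ with axis $A$. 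But then the map $v\mapsto d(v,A)$ is $\Gamma$-invariant and takes every value in $\NN$ (each vertex of $A$ has one edge leaving the axis into an infinite subtree), so $\Gamma$ has infinitely many orbits on $V(\overrightarrow{T})$, contradicting quasi-transitivity.

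I expect the main obstacle to be precisely this last step, the elimination of the ``all translations'' case, and the crucial ingredient is the orientation: it forces a common fixed end $\omega$ and hence the homomorphism $\tau$ to $\mathbb{Z}$ whose kernel consists only of vertex-stabilizers, which collapses $\Gamma$ to a cyclic group. Without the orientation a free quasi-transitive action would be possible (e.g.\ a free group acting on the $3$-regular tree), so this is exactly where the hypothesis is used. In writing the full proof I would take some care to justify that $\ker\tau$ is precisely the set of vertex-stabilizers together with the identity, and that a single hyperbolic translation cannot act quasi-transitively on the $3$-regular tree.
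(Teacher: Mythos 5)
Your proposal is correct, and it shares the paper's reduction but diverges from it in the decisive step. Like the paper, you transfer $\Gamma$ to $\Aut(\overrightarrow{T})$ and invoke Remark~\ref{rem:halin} to reduce the claim to producing a non-identity element of $\Gamma$ that fixes a vertex of $\overrightarrow{T}$. The paper then produces this element directly: it defines $u\sim v$ to mean that $u$ and $v$ are equidistant from their minimum common ancestor, observes that every $\sim$-class is infinite, and applies the pigeonhole principle (finitely many $\Gamma$-orbits) to obtain some $g\in\Gamma$ sending a vertex $u$ to a distinct vertex $v\sim u$; such a $g$ must fix all common ancestors of $u$ and $v$. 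You instead argue by contradiction, assuming every non-identity element translates a directed line: all such axes have forward end $\omega$, so the Busemann homomorphism $\tau\colon\Gamma\to\mathbb{Z}$ is injective, forcing $\Gamma$ to be trivial or infinite cyclic, and a single translation cannot act quasi-transitively because $v\mapsto d(v,A)$ is an unbounded $\Gamma$-invariant function. Both arguments are sound. The paper's is more elementary and self-contained (pigeonhole only, no classification of tree isometries and no Busemann functions), and it constructs the elliptic element explicitly; yours is more structural, pinpointing exactly what a free quasi-transitive action would have to be, and it effectively proves the more general statement that the paper later isolates as Lemma~\ref{lem:mat} (a group acting quasi-transitively on an $\infty$-ended tree and stabilizing an end cannot act freely)---which the paper proves by re-running the same $\sim$-argument. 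In a complete write-up you would need to justify that a translation fixing $\omega$ has $\omega$ as an end of its axis (so that $\ker\tau$ really contains only vertex-stabilizers and the identity), and dispose of the case where $\Gamma$ is trivial; both points are routine, and you flag the first one yourself.
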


\begin{proofofclaim}
  Recall that there is a natural bijection  between $\Aut(G)$ and
  $\Aut(\protect\overrightarrow{T})$. If some automorphism $g \in \Gamma$
  distinct from the identity has
  the property that the corresponding automorphism of
  $\protect\overrightarrow{T}$ stabilizes a vertex of $\protect\overrightarrow{T}$, then
  the result follows from Remark \ref{rem:halin}. So we only need to
  prove the existence of such an automorphism $g \in \Gamma$.

  We say that $v$ is an \emph{ancestor} of $u$ in $\protect\overrightarrow{T}$ if
  there is a directed path from $u$ to $v$ in
  $\protect\overrightarrow{T}$. The \emph{minimum common ancestor} of two vertices $u$
  and $v$ refers to the common ancestor of $u$ and $v$ which is minimum in the
  ancestor relation (viewed as a partial order on $V(\protect\overrightarrow{T})$). We define a relation $\sim$ on $V(\protect\overrightarrow{T})$ as
  follows. Given two vertices $u,v\in V(\protect\overrightarrow{T})$, $u\sim
  v$ if and only if $d_T(u,w)=d_T(v,w)$, where $w$ denotes the minimum
  common ancestor of $u$ and $v$ and the function $d_T$ denotes the
  distance in $T$ (or equivalently  $\protect\overrightarrow{T}$). Note that $\sim$ is an
  equivalence relation, and each equivalence class is infinite. In
  particular, since $\Gamma$ acts quasi-transitively on $V(G)$, there
  exist an automorphism $g\in \Gamma$ such that the corresponding
  automorphism of $\protect\overrightarrow{T}$ maps some vertex $u$ to some
  different vertex $v\sim u$. But then such a non-trivial automorphism must
  stabilize all common ancestors of $u$ and $v$ in
  $\protect\overrightarrow{T}$, as desired.
\end{proofofclaim}

Now, assume for the sake of contradiction that $G$ has a periodic
orientation. Then, by definition, the subgroup of
orientation-preserving automorphisms of $G$ acts quasi-transitively on
$V(G)$. By Claim \ref{clm: invert}, some orientation-preserving
automorphism $g$ of $G$ must exchange two vertices $u_2$ and $u_3$, for some $u\in
V(\protect\overrightarrow{T})$. Assume by symmetry that $(u_2,u_3)$ is an
arc in the periodic orientation of $G$. Then $g$ maps the arc $(u_2,u_3)$ to
$(u_3,u_2)$, which contradicts the property that $g$ is
orientation-preserving (cf. \cref{lem:edge-inversion}). This contradiction shows  that $G$
does not admit a periodic orientation (and thus does not admit a periodic proper
vertex-coloring either).

\subsection*{Edge-colorings}

We now describe a similar construction of a quasi-transitive locally finite tree that does not admit any periodic proper edge-coloring. Consider the tree $T'$ obtained from the orientation $\vec{T}$ of the infinite $3$-regular tree we described above, after replacing every arc $(u,v)$ by a (non-oriented) path $u-r_{uv}-s_{uv}-v$ and after adding a vertex $t_{uv}$ adjacent to $s_{uv}$. Equivalently, $T'$ is obtained from the graph $G$ we constructed above after contracting every triangle $\Delta_{u}$ into a single vertex $u$ (see Figure \ref{fig: tree2}).
Then the exact same arguments used in the proof of Claim \ref{clm: invert} apply to show that for every subgroup $\Gamma$ of $\Aut(T')$ acting quasi-transitively on $T'$, there exists some non-trivial element $g\in \Gamma\setminus \sg{1_{\Gamma}}$ that fixes a vertex $w\in V(T)$ while exchanging two of its neighbors. In particular, $g$ maps some edge $e$ to some edge $f$ incident to $e$, which contradicts the fact that $g$ is color-preserving (since $e$ and $f$ have distinct colors in any proper edge-coloring). This contradiction shows that $T'$ cannot admit a periodic proper edge-coloring. 

\begin{figure}[htb]
  \centering
  \includegraphics[scale=1]{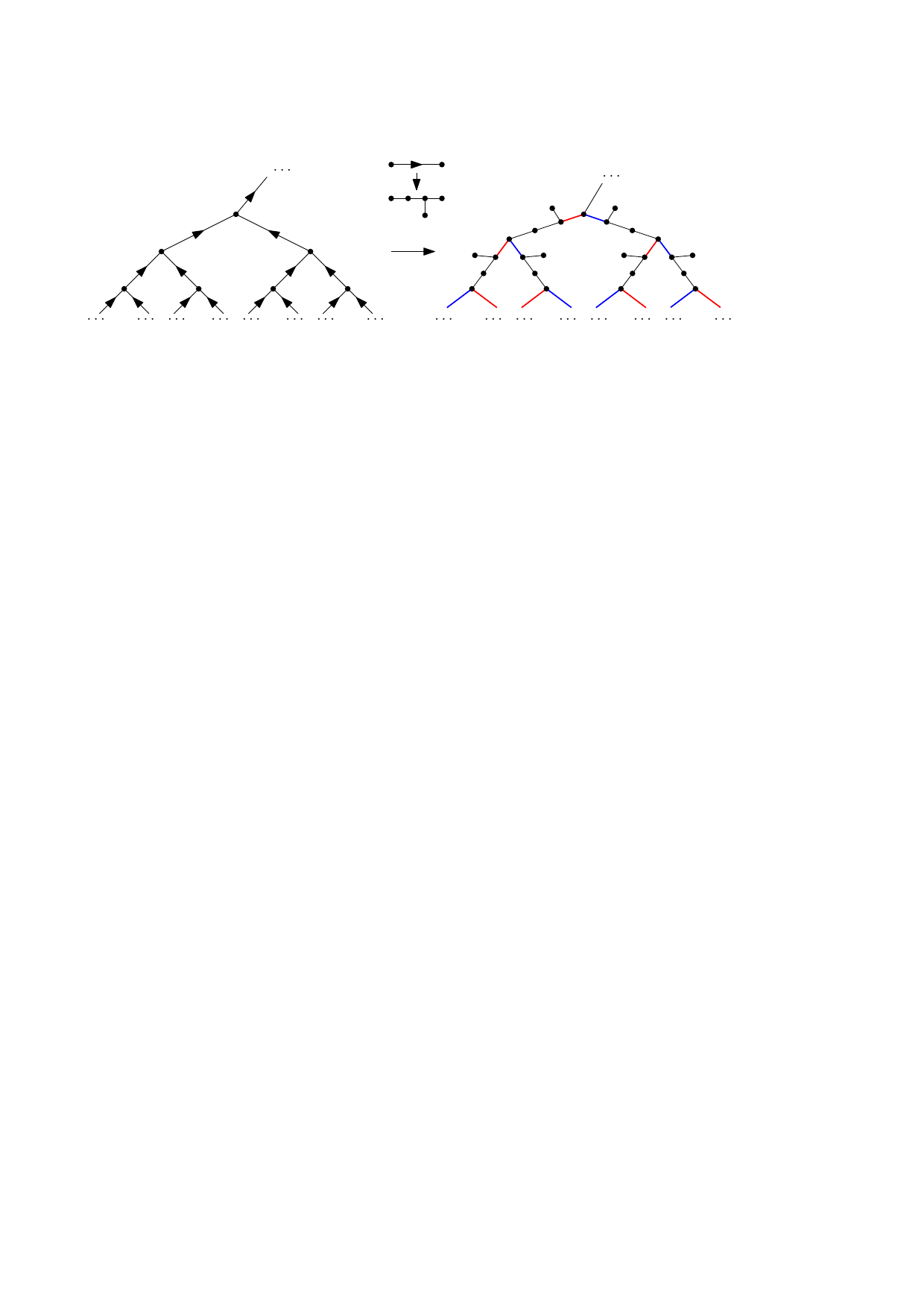}
  \caption{The construction of a quasi-transitive tree $T'$ with no periodic proper edge-coloring (right), starting from the orientation $\protect\overrightarrow{T}$ of the infinite 3-regular tree (left). In every periodic edge-coloring of $T'$, some color-preserving automorphism of $T'$ has to exchange a pair of incident edges (depicted in red and blue on the figure).}
  \label{fig: tree2} 
\end{figure}

\subsection{Obstructions to periodic orientations and colorings}
\label{sec: obstructions}

We say that a
tree-decomposition $(T,\mathcal X)$ of a graph $G$ is \emph{canonical}, if $\Aut(G)$ induces a group action by automorphisms on $T$ such that for every $g
\in \Aut(G)$ and $t\in V(T)$, $g(X_t)= X_{g\cdot t}$. Here $t\mapsto  g\cdot t$ denotes the action of $g\in\Aut(G)$ on $t\in V(T)$.
In particular, every automorphism of $G$ sends bags of $(T,\mathcal X)$ to bags and
adhesion sets to adhesion sets.

\medskip

Recall that the action of a group $\Gamma$ on a set $X$  is \emph{free} if the only
element of $\Gamma$ stabilizing an element of $X$ is the identity
element $1_\Gamma$. We will need the following result, whose proof is based on the same arguments we used to prove Claim \ref{clm: invert}.

\begin{lemma}\label{lem:mat}
Let $\Gamma$ be a group acting quasi-transitively on a $\infty$-ended tree $T$ and stabilizing an
end of $T$. Then the action of $\Gamma$ on $T$ cannot be free.
\end{lemma}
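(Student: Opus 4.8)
The plan is to argue by contradiction: I will assume that the action of $\Gamma$ on $T$ is free and produce a non-trivial element of $\Gamma$ fixing a vertex. The whole argument runs in parallel with the proof of \cref{clm: invert}. First I would fix the end $\omega$ that $\Gamma$ stabilizes and orient $T$ towards $\omega$, so that every vertex $v$ has a unique parent $\mathrm{par}(v)$ (its neighbour on the ray from $v$ to $\omega$) and hence a unique ray to $\omega$; this gives the \emph{ancestor} relation and the \emph{minimum common ancestor} $\mathrm{mca}(u,v)$ exactly as in \cref{clm: invert}. The standard Busemann (horocycle) construction attached to $\omega$ then yields a level function $\ell\colon V(T)\to\mathbb Z$ with $\ell(\mathrm{par}(v))=\ell(v)-1$. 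Since every $g\in\Gamma$ fixes $\omega$, it commutes with $\mathrm{par}$ and maps $\omega$-rays to $\omega$-rays, so $\ell(g(v))-\ell(v)$ is independent of $v$; this defines a homomorphism $t\colon\Gamma\to\mathbb Z$ with $\ell(g(v))=\ell(v)+t(g)$. The key observation is that the relation $\sim$ of \cref{clm: invert}, namely $d_T(u,w)=d_T(v,w)$ for $w=\mathrm{mca}(u,v)$, is precisely the relation $\ell(u)=\ell(v)$.

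The argument then splits according to whether $\ker t$ is trivial. Suppose first $\ker t\neq\{1_\Gamma\}$ and pick $h\in\ker t$ with $h\neq 1_\Gamma$. As the action is assumed free, $h$ fixes no vertex; fix any $v$ and set $v'=h(v)\neq v$. Since $t(h)=0$ we have $\ell(v')=\ell(v)$, so $v\sim v'$, and since $h$ commutes with $\mathrm{par}$ it carries the ray from $v$ to $\omega$ onto the ray from $v'$ to $\omega$ preserving levels. Writing $w=\mathrm{mca}(v,v')$, the two rays coincide from $w$ onwards; as $h$ maps the level-$\ell(w)$ vertex of the first ray to the level-$\ell(w)$ vertex of the second, and both are $w$, we get $h(w)=w$, contradicting freeness. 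I expect this case to be routine, being a direct transcription of the computation in \cref{clm: invert}.

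The hard part will be the remaining case $\ker t=\{1_\Gamma\}$, which has no analogue in \cref{clm: invert} (there the level classes are automatically infinite, so a same-level pair in one orbit always exists). Here $t$ is injective, so $\Gamma$ embeds in $\mathbb Z$; since $\Gamma$ is infinite (it acts quasi-transitively on the infinite graph $T$), we obtain $\Gamma\cong\mathbb Z$. I would then exploit that $\mathbb Z$ has no element of order $2$: a free action of $\Gamma$ therefore has no edge inversions, so by local finiteness and quasi-transitivity the quotient $X=T/\Gamma$ is a finite graph and $T\to X$ is a covering map with deck group $\Gamma$. As $T$ is a tree it is the universal cover of $X$, whence $\pi_1(X)\cong\Gamma\cong\mathbb Z$; thus $X$ has a single independent cycle and its universal cover $T$ is quasi-isometric to $\mathbb Z$, hence $2$-ended. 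This contradicts the hypothesis that $T$ is $\infty$-ended. (Alternatively one could invoke the Milnor--\v{S}varc lemma directly: the generator shifts levels by a fixed nonzero amount, so the free action is also proper, and a proper cocompact $\mathbb Z$-action forces $T$ to be quasi-isometric to $\mathbb Z$.)

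Combining the two cases, the assumption that the action is free is untenable, which proves the lemma. The only genuinely delicate points are the well-definedness of $\ell$ and of the homomorphism $t$, which are standard facts about a fixed end of a tree, and the identification of the second case with a free cocompact $\mathbb Z$-action; everything else is the same bookkeeping as in \cref{clm: invert}.
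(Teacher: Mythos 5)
Your proof is correct in substance but takes a genuinely different route from the paper's, and one step needs repair. The paper's proof does not split into cases: it orients $T$ towards the fixed end $\omega$, asserts that the equivalence classes of $\sim$ (your level sets $\ell^{-1}(k)$) are infinite because $T$ is quasi-transitive and $\infty$-ended, and then pigeonholes over the finitely many $\Gamma$-orbits to find $u\neq v$ with $u\sim v$ and $g\in\Gamma$ with $g(u)=v$; such a $g$ fixes every common ancestor of $u$ and $v$, so the action is not free. Your Case 1 ($\ker t\neq\{1_\Gamma\}$) is exactly this fixed-point computation, and it has the nice feature of not needing the infinitude of the level sets at all --- a claim the paper dispatches in one line but which itself requires a small argument (essentially via your homomorphism $t$). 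The price you pay is Case 2, which the paper does not need and which brings in covering-space theory where the paper uses only elementary counting.

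The issue is in Case 2: your derivation of ``the quotient $X=T/\Gamma$ is a finite graph'' invokes local finiteness of $T$, which is not a hypothesis of \cref{lem:mat} (and is never verified for the decomposition trees of canonical tree-decompositions to which the paper applies the lemma in \cref{lem:nsc}); the Milnor--\v{S}varc alternative has the same defect, since properness of the action again relies on local finiteness. The step can be patched without that assumption: freeness plus the absence of inversions already makes $T\to X$ a covering with $T$ simply connected, so $\pi_1(X)\cong\Gamma\cong\mathbb{Z}$ holds regardless of whether $X$ is finite; but a connected graph (possibly with loops and parallel edges) with finitely many vertices whose fundamental group is free of rank $1$ has exactly one edge outside a spanning tree, hence finitely many edges. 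So $X$ is finite after all, $T$ is the universal cover of a finite graph of first Betti number $1$, hence $2$-ended, contradicting the hypothesis. Alternatively, Case 2 closes without any topology: if $t$ is injective and the action is free, each $\Gamma$-orbit meets each level set at most once, so every level set has at most as many vertices as there are orbits, i.e.\ finitely many; this contradicts the infinitude of the level sets --- but that is precisely the claim of the paper that your case split was otherwise allowing you to avoid, so you would then need to prove it.
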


\begin{proof}
Let $\omega$ denote an end of $T$ fixed by the action of $\Gamma$ on $T$, and consider the orientation $\vec{T}$ of $T$ obtained after orienting every edge $e=uv$ toward $\omega$, i.e., if $T_u, T_v$ denote the two components of $T-e$ containing respectively $u$ and $v$,
then we add the arc $(u,v)$ in $\vec T$ if $\omega$ lives in $T_v$. Now every vertex of $\vec{T}$ has out-degree exactly $1$, and $\Gamma$ induces a group action on $\vec{T}$.


The remainder of the proof  is identical to the one in the proof of Claim \ref{clm: invert}: we define similarly an equivalence relation $\sim$ on $\vec{T}$ by letting $u\sim v$ if and only if $u$ and $v$ lie at the same distance from their minimum common ancestor. As $T$ is quasi-transitive and has infinitely many ends, the equivalence classes of $\sim$ must be infinite. In particular, as $\Gamma$ acts quasi-transitively on $T$, there exist $u,v\in V(T)$ with $u\neq v$, $u\sim v$ and some $g\in \Gamma$ such that $g(u)=v$. Then $g$ is non-trivial, and stabilizes every common ancestor of $u$ and $v$, so it follows that the action of $\Gamma$ on $T$ is not free.
\end{proof}

The results of this subsection deal with the situation of locally finite infinitely ended graphs with an automorphism group fixing an end and acting transitively on the graph. It is known \cite{M92} that these graphs are quasi-isometric to trees and hence have bounded treewidth. 
It follows that the same is true if the action fixes an end but only acts quasi-transitively on the graph.

The example presented in the previous subsection is a particular case of a more general
family of graphs whose structure is described by the following
lemma. 

\begin{lemma}\label{lem:nsc}
	Let $G$ be a locally finite quasi-transitive graph and let $(T, \X)$ be a canonical tree-decomposition of $G$ such that:
	\begin{itemize} 
		\item for  every automorphism $g$ of $G$ that stabilizes the bag of
                  some vertex $t\in V(T)$ but does not stabilize the
                  bag of some
                  neighbor of $t$ in $T$, $g$ exchanges
                  two adjacent vertices in $G$,
                  and
                  \item there is an end of $G$ which is stabilized by
                    all automorphisms of $G$.
                  \end{itemize}
                  Then $G$ does not have a periodic orientation (and
                  thus does not have a periodic proper vertex-coloring either).
\end{lemma}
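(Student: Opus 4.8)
The plan is to argue by contradiction, generalizing the argument used for \cref{clm: invert} but routing it through \cref{lem:mat}. Suppose $G$ admits a periodic orientation, and let $\Gamma\le\Aut(G)$ denote the subgroup of orientation-preserving automorphisms. By the definition of periodicity, $\Gamma$ acts quasi-transitively on $V(G)$, and by \cref{lem:edge-inversion} no element of $\Gamma$ inverts an edge of $G$. It is this last property that I will eventually contradict.

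The first and most delicate step is to transfer the situation from $G$ to the tree $T$. Since $(T,\X)$ is canonical, $\Aut(G)$, and hence $\Gamma$, acts on $T$, and I claim that the pair $(\Gamma,T)$ meets the hypotheses of \cref{lem:mat}. The end of $G$ that is fixed by all of $\Aut(G)$ corresponds, through the tree-decomposition, to an end of $T$; since the action on $T$ is induced by that on $G$ and the latter fixes this end, $\Gamma$ fixes the corresponding end of $T$. Moreover $T$ is $\infty$-ended (we are in the infinitely-ended regime discussed before the lemma, where $G$ is quasi-isometric to a tree), and $\Gamma$ acts quasi-transitively on $T$ because it does so on $V(G)$ and $(T,\X)$ is canonical with finite adhesion sets. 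I expect this translation to be the main obstacle: one must fix the correspondence between the end of $G$ and an end of $T$, confirm that $T$ really has infinitely many ends, and verify that quasi-transitivity descends from $V(G)$ to $V(T)$; all three hinge on the finiteness of the adhesion sets, which is what lets an end of $G$ be tracked along a ray of $T$.

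Granting this, I would invoke \cref{lem:mat} to conclude that the action of $\Gamma$ on $T$ is not free. In fact I need slightly more than the bare statement, but exactly what its proof delivers: a non-identity $g\in\Gamma$ and vertices $u\neq v$ of $T$ with $g(u)=v$ that fixes every common ancestor of $u$ and $v$. In particular $g$ acts non-trivially on $T$ while fixing at least one vertex. The fixed-point set of $g$ in $T$ is then a non-empty proper subtree, so I can select an edge $tt'$ of $T$ with $t$ fixed by $g$ and $t'$ moved by $g$. By canonicity $g$ stabilizes the bag $X_t$ while sending $X_{t'}$ to $X_{g\cdot t'}\neq X_{t'}$, so $g$ stabilizes the bag of $t$ without stabilizing the bag of its neighbor $t'$. (Here I tacitly assume that distinct tree-vertices carry distinct bags, which can be arranged by contracting any edge whose two bags coincide.)

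At this point the first hypothesis of the lemma applies to $g$ and forces it to exchange two adjacent vertices of $G$, that is, to invert an edge. This contradicts the property recorded in the first paragraph that no element of $\Gamma$ inverts an edge, so $G$ has no periodic orientation. Finally, a periodic proper vertex-coloring of $G$ would induce a periodic orientation --- orient each edge from its lower-colored to its higher-colored endpoint, for a fixed linear order on the colors --- so $G$ admits no periodic proper vertex-coloring either.
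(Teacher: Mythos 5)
Your proof is correct and follows essentially the same route as the paper's: pass to the induced action of the orientation-preserving subgroup $\Gamma$ on $T$, apply \cref{lem:mat} (in the sharper form its proof actually delivers) to get an automorphism stabilizing a bag but not the bag of a neighbour, and then use the first hypothesis to produce an edge inversion contradicting orientation-preservation. If anything, you are more explicit than the paper about the points its terse proof glosses over, namely the $\infty$-endedness of $T$, the quasi-transitivity of the induced action, and the step from a non-free action to an automorphism that fixes a tree vertex while moving a neighbouring one.
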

\begin{proof}
Assume for the sake of contradiction that $G$ has a periodic
orientation, and let $\Gamma$ be the group of orientation-preserving
automorphisms of $G$, acting quasi-transitively on $G$, and thus also on
$T$. Note that $\Gamma$ stabilizes the end of $T$ corresponding to the
end of $G$ stabilized by $\Aut(G)$. By Lemma \ref{lem:mat}, the action
of $\Gamma$ on $T$ cannot be free, so there is an automorphism $g\in
\Gamma$ that stabilizes some bag $X_t$ of  $(T, \X)$, but does not
stabilize some bag $X_{t'}$ where $t'$ is a neighbor of $t$ in $T$. It
follows that $g$ inverts an edge $uv$ in $G$, which
contradicts the fact that $\Gamma$ is orientation-preserving.
\end{proof}

The following slightly different class of graphs
also provides examples of locally finite quasi-transitive graphs of bounded
treewidth with no periodic proper vertex-coloring.

\begin{lemma}
	Let $G$ be a locally finite $\infty$-ended quasi-transitive graph, and let $(T, \X)$ be a canonical tree-decomposition of $G$ such that:
	\begin{itemize}
		\item  every bag is a finite clique, and 
		\item for every adhesion set $X$ there is a unique edge
                  $tt'\in E(T)$ such that $X$ is the intersection of
                  the bags of $t$ and $t'$,
                  and
                  \item there is an end of $G$ which is stabilized by
                    all automorphisms of $G$.
                  \end{itemize}
                  Then $G$ does not have a periodic proper vertex-coloring.
\end{lemma}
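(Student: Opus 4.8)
The plan is to argue by contradiction, closely following the strategy already used for the orientation case in Lemma \ref{lem:nsc}, but adapted to proper vertex-colorings. Suppose $G$ admits a periodic proper vertex-coloring, and let $\Gamma$ be the subgroup of color-preserving automorphisms of $G$, which by periodicity acts quasi-transitively on $V(G)$. Since $(T,\X)$ is canonical, $\Gamma$ induces a group action on $T$ by automorphisms. The first key step is to transfer the quasi-transitivity: because $\Gamma$ acts quasi-transitively on $V(G)$, and because each bag is a finite clique sending vertices to bags, $\Gamma$ should also act quasi-transitively on $T$ (one verifies that vertices lying in a common $\Gamma$-orbit of $G$ force the bags containing them into finitely many $\Gamma$-orbits of $T$). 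Moreover $\Gamma$ stabilizes the end of $T$ corresponding to the $\Aut(G)$-stabilized end of $G$.

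The central move is then to invoke Lemma \ref{lem:mat}: since $T$ is an $\infty$-ended tree on which $\Gamma$ acts quasi-transitively while stabilizing an end, the action of $\Gamma$ on $T$ cannot be free. Hence there is a non-trivial $g\in\Gamma$ stabilizing some vertex $t\in V(T)$, and by the usual tree argument (as in the proof of Claim \ref{clm: invert}) we may take $g$ to stabilize $X_t$ while not stabilizing the bag $X_{t'}$ of some neighbor $t'$ of $t$. The task is to extract from this a contradiction with properness. The second hypothesis — that every adhesion set $X$ is the intersection of the bags of a \emph{unique} edge $tt'\in E(T)$ — is precisely what prevents $g$ from permuting the adhesion sets at $t$ trivially: since $g$ fixes $t$ but moves $t'$, it moves the adhesion set $X_t\cap X_{t'}$ to a distinct adhesion set $X_t\cap X_{t''}$ sitting inside the same clique bag $X_t$.

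From here the argument uses that $X_t$ is a \emph{clique}. The automorphism $g$ permutes the vertices of the finite clique $X_t$ (as $g$ stabilizes $X_t$), and it moves at least one vertex of $X_t$ to another vertex of $X_t$: indeed, since $g$ sends the adhesion set $X_t\cap X_{t'}$ to a different adhesion set of the clique $X_t$, some vertex $u\in X_t$ has $g(u)=w\neq u$ with $w\in X_t$. But then $u$ and $w$ are two distinct vertices of the clique $X_t$, hence adjacent in $G$, and $g$ is color-preserving, forcing $c(u)=c(g(u))=c(w)$; this contradicts properness of $c$, since adjacent vertices must receive distinct colors. This contradiction completes the proof.

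The main obstacle I anticipate is the bookkeeping in the first step — carefully confirming that $g$ genuinely moves some vertex of the clique $X_t$ rather than merely permuting bags while fixing every vertex of $X_t$ setwise-but-pointwise. Here the clique-plus-unique-adhesion hypotheses must be combined: if $g$ fixed every vertex of $X_t$ pointwise, it would fix the adhesion set $X_t\cap X_{t'}$, and by uniqueness of the edge realizing that adhesion set $g$ would have to fix $t'$ as well, contradicting the choice of $g$ as moving $X_{t'}$. Making this implication airtight — that pointwise-fixing $X_t$ forces fixing all neighbors of $t$, hence freeness on a neighborhood — is where the uniqueness condition on adhesion sets does its real work, and it is the step I would write out most carefully.
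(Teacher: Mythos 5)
Your proof is correct and takes essentially the same approach as the paper: both arguments hinge on Lemma \ref{lem:mat} together with the interplay between clique bags, the proper coloring, and the uniqueness of adhesion sets. The only difference is organizational — the paper shows that any color-preserving automorphism fixing a node of $T$ must fix its bag pointwise (clique plus properness) and hence fix all neighboring nodes (uniqueness), so that $\Gamma$ would act freely on $T$, contradicting Lemma \ref{lem:mat}; you run the same two ingredients in the contrapositive order, extracting from Lemma \ref{lem:mat} an element fixing $t$ but moving a neighbor $t'$ and contradicting properness directly.
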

\begin{proof}
 Assume for the sake of contradiction that $G$ has a periodic proper
 vertex-coloring $c$, and let $\Gamma$ be the subgroup of
 color-preserving automorphisms of $G$ (acting quasi-transitively on
 $G$, and thus also on $T$).
For $g \in \Gamma$, we denote by $g^T$ the action of $g$ on
$T$. Suppose for a contradiction that $\Gamma$ does not act freely on
$T$. Then, there exists an edge $tt' \in E(T)$ and $g_1, g_2 \in
\Gamma$ such that $g_1^T(t) = g_2^T(t)$ but $g_1^T(t') \neq
g_2^T(t')$. Since the bag $X_t$ of $t$ in $(T, \X)$  is a clique, every vertex of
$X_t$ is colored with a distinct color, and since $\Gamma$ respects
the coloring, it follows that $g_1(v) = g_2(v)$ for all $v \in
X_t$. In particular, $g_1$ and $g_2$ agree on $X_t \cap X_{t'}$, the
adhesion set of $(T, \X)$ corresponding to the edge $tt'$ of $T$. But
since the adhesion sets of $X_t$ are unique, it follows that
$g_1^T(t') = g_2^T(t')$, a contradiction. This shows that $\Gamma$
acts freely on $T$, a contradiction with Lemma \ref{lem:mat}.
\end{proof}

\subsection{Limits of color reduction}

 Lemma \ref{lem:cr} shows that Problem \ref{pro:4} is equivalent to the stronger version where we ask for a periodic proper $(\Delta(G)+1)$-vertex-coloring. This raises the following natural question.

  \begin{problem}\label{pro:pc}
      Is it true that every locally finite graph $G$ with a periodic proper vertex-coloring has a periodic proper vertex-coloring with $\chi(G)$ colors?
  \end{problem}

  We will see in the next section that the answer to this question is positive for 2-ended graphs, but in the remainder of this section we show that the answer is negative for $\infty$-ended graphs.

    \begin{figure}[htb]
  \centering
  \includegraphics[scale=1.15]{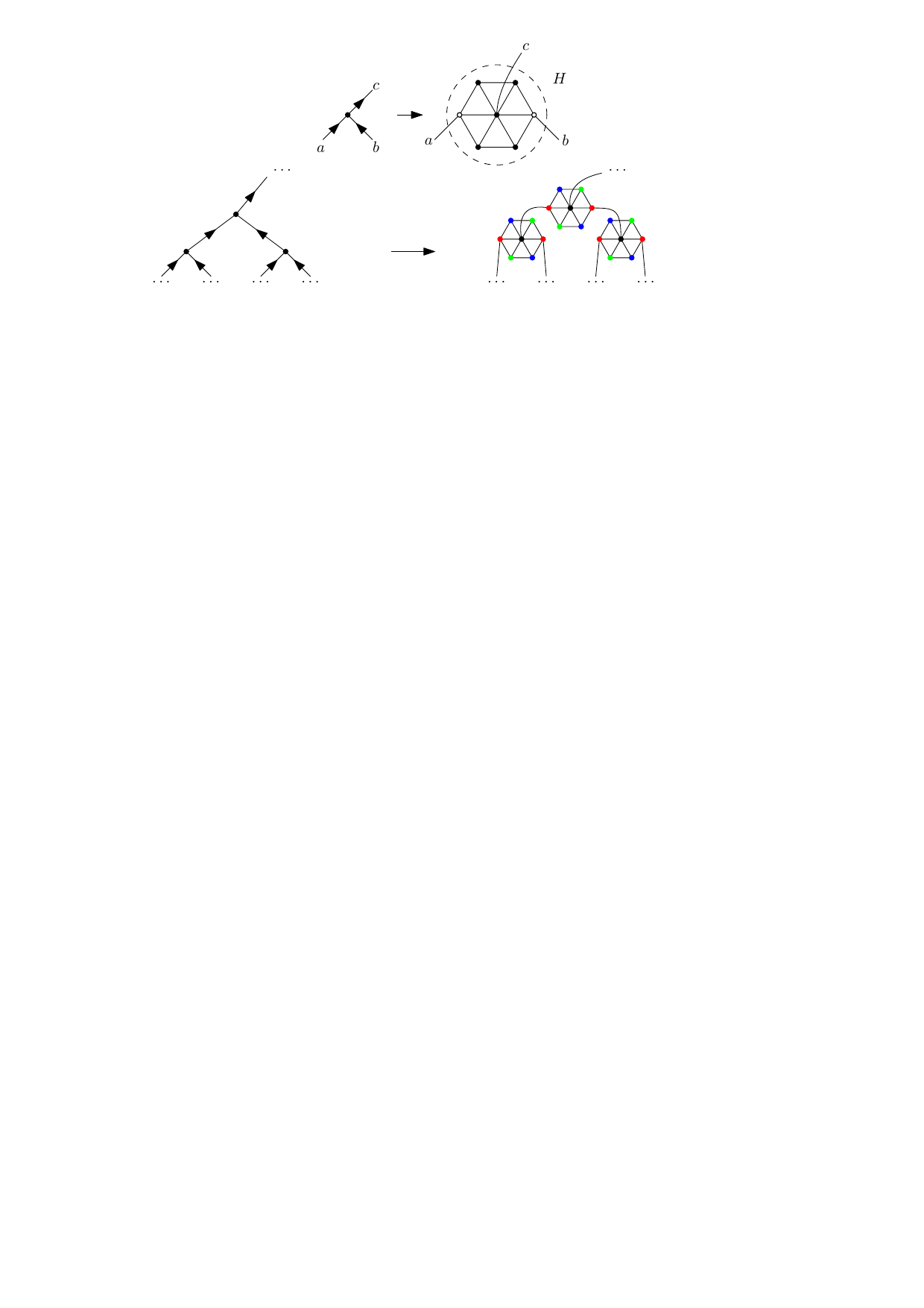}
  \caption{The construction of a quasi-transitive graph $G$ with no periodic proper vertex-coloring with $\chi(G)=3$ colors but with a periodic proper vertex-coloring with $\chi(G)+1=4$ colors (right), starting from an orientation $\protect\overrightarrow{T}$ of the infinite 3-regular tree (left). }
  \label{fig: tree3} 
\end{figure}


Consider the unique orientation $\protect\overrightarrow{T}$ of the infinite 3-regular tree where every vertex has out-degree exactly one (see Figure \ref{fig: tree3}, left), and replace every vertex $v\in V(\protect\overrightarrow{T})$ by a copy $H_v$ of the finite graph $H$ depicted in Figure \ref{fig: tree3}, top right, connecting incident edges as illustrated there. The resulting graph $G$ is depicted in Figure \ref{fig: tree3}, right. This graph is locally finite, $\infty$-ended, quasi-transitive, and satisfies $\chi(G)=3$. Figure \ref{fig: tree3} (right) illustrates the fact that $G$ has a periodic proper vertex-coloring with $4=\chi(G)+1$ colors. On the other hand, we claim that $G$ does not have such a periodic coloring with $3=\chi(G)$ colors. To see this, we first observe that automorphisms of $G$ map copies of $H$ to copies of $H$, so any automorphism of $G$ induces an automorphism of $\protect\overrightarrow{T}$. Moreover, every automorphism of $\protect\overrightarrow{T}$ must stabilize the end $\omega$ of $\protect\overrightarrow{T}$ containing  all infinite 1-way directed paths. 


Assume for the sake of contradiction that $G$ has a periodic proper coloring $c$ with 3 colors, and let $\Gamma$ be the subgroup of $\Aut(G)$ of automorphisms preserving the coloring $c$. By Lemma \ref{lem:mat}, the action
of $\Gamma$ on $\protect\overrightarrow{T}$ cannot be free, so there is an automorphism $g\in
\Gamma\setminus \sg{1_{\Gamma}}$ whose induced action on $\overrightarrow T$ stabilizes some vertex $v\in \overrightarrow T$, but does not
stabilize some in-neighbor $u$ of $v$ in $\overrightarrow T$. In particular, $g$ stabilizes $H_v$, and maps $H_u$ to some disjoint copy $H_{u'}$ of $H$, for some $u'\neq u$. This automorphism must exchange the two vertices of $H_v$ connected to the copies of $H$ associated to the in-neighbors of $v$ in $\protect\overrightarrow{T}$ (these vertices are depicted in white in Figure \ref{fig: tree3}, top right). However it can be checked easily that in any proper vertex-coloring of $H$ with $\chi(H)=3$ colors, these 2 vertices have distinct colors, which contradicts the fact that $g$ is color-preserving. This shows that $G$ does not have a periodic proper vertex-coloring with $\chi(G)$ colors, while it has a periodic proper vertex-coloring with $\chi(G)+1$ colors.

\begin{remark}
    Consider the graph $G$ described above, and replace in $G$ every vertex by a clique of size $k$, and every edge be a complete bipartite graph $K_{k,k}$. Then the resulting graph has chromatic number $3k$, and a proof similar to that above shows that every periodic proper vertex-coloring requires at least $4k$ colors. 
\end{remark}

\begin{remark}
     We have proved in Lemma \ref{lem:cr} that any graph $G$ with a periodic proper vertex-coloring has a periodic proper vertex-coloring with $\Delta(G)+1$ colors. The example constructed above has maximum degree $\Delta(G)=4$, and no periodic proper vertex-coloring with $3=\Delta(G)-1$ colors, which shows that Lemma \ref{lem:cr} is close to best possible.
\end{remark}

Our construction in this section gives a negative answer to Problem \ref{pro:pc} as it stands, but the following weaker problem remains open (even for $f(x)=\tfrac43x+1$).

\begin{problem}\label{pro:pc2}
      Is there a function $f$ such that every locally finite graph $G$ with a periodic proper vertex-coloring has a periodic proper vertex-coloring with $f(\chi(G))$ colors?
  \end{problem}
  
  \section{2-ended graphs and bounded pathwidth}\label{sec:pw}

 In previous sections we have seen  1-ended and $\infty$-ended examples of
    quasi-transitive locally finite graphs with no periodic
    orientation (and thus no periodic proper vertex-coloring). In
    this section, we
    show that every 2-ended quasi-transitive locally finite graph has
    a periodic vertex-coloring (and thus a periodic orientation). We start with the case of Cayley
    graphs of finitely generated groups, for which the result directly
    follows from classical results on subshifts of finite type in
    symbolic dynamics. We then show that the same ideas can be applied
    more generally to any 2-ended quasi-transitive locally finite graph.
  
\subsection{2-ended groups and subshifts of finite type}

  Consider a finitely generated group $\Gamma$. Let $A$ be a finite set of colors and consider any coloring $\sigma: \Gamma\to A$. For every $g\in \Gamma$, we define the coloring $g\cdot \sigma: \Gamma \to A$ by setting for each $x\in \Gamma$, $g\cdot \sigma (x):=\sigma(g^{-1}x)$. Note that this defines a group action of $\Gamma$ on the set $A^{\Gamma}$ of $A$-colorings of $\Gamma$. 

   Let $F$ be a finite subset of $\Gamma$, and let $\alpha: F\to A$ be a coloring of $F$. A coloring $\sigma: \Gamma \to A$ of the elements of $\Gamma$ is said to \emph{avoid the pattern} $\alpha$ (we also say that \emph{the pattern $\alpha$ is forbidden in} $\sigma$) if for any $g\in \Gamma$, the restriction of $g\cdot \sigma$ to $F$ is distinct from $\alpha$. In other words, this means that for all sets $S$ in the $\Gamma$-orbit of $F$, the corresponding coloring $\alpha$ of $S$ is avoided in $\sigma$.
   
   A \emph{subshift of finite type in $\Gamma$} is the set of all colorings of $\Gamma$ avoiding a given finite set of patterns. We say that a coloring $\sigma: \Gamma \to A$ is \emph{strongly periodic} if $\Stab_{\Gamma}(\sigma)$ has finite index in $\Gamma$, or equivalently if the orbit $\Gamma \cdot \sigma$ is finite.
   We say that $\Gamma$ is \emph{strongly periodic} if any non-empty subshift of finite type in $G$ contains a strongly periodic coloring. 

\begin{remark}
 \label{rem: QT-SFT}
 Let $\sigma: \Gamma \to A$, and fix a finite generating set $S$ of
 $\Gamma$. Then $\sigma$ is strongly periodic if and only if $\Gamma$
 induces a quasi-transitive group action on the colored graph
 $(\Cay(\Gamma,S), \sigma)$. This follows from the fact that the
 elements of $\Gamma$ that induce automorphisms of the colored graph
 $(\Cay(\Gamma,S), \sigma)$ are exactly the elements from
 $\Stab_{\Gamma}(\sigma)$. Thus the $\Gamma$-orbits of the vertices of
 the colored graph $(\Cay(\Gamma,S), \sigma)$ correspond to the
 different right-cosets $\Stab_{\Gamma}(\sigma)g$. (This is just a
 rephrasing of the arguments in the proofs of Lemmas \ref{lem:sptop} and \ref{lem:ptosp}
 in Section \ref{sec:prel}.)
\end{remark}

It was proved in \cite{CP15} that every group which is virtually $\mathbb{Z}$  is strongly periodic (see also \cite{Coh17}). This directly implies the following, which gives a positive answer to Problem \ref{pro:pc} in the case of Cayley graphs of bounded pathwidth.

   \begin{theorem}\label{thm:2endedgroup}
For every finitely generated 2-ended group $\Gamma$ and every finite
generating set $S$, the graph $G=\Cay(\Gamma,S)$ has a periodic
proper  vertex-coloring with $\chi(G)$ colors.
   \end{theorem}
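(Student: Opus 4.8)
The plan is to reduce the theorem directly to the strong periodicity result of \cite{CP15} via Remark \ref{rem: QT-SFT}, which identifies strongly periodic colorings of $\Gamma$ with quasi-transitive group actions on colored Cayley graphs. Since $\Gamma$ is finitely generated and $2$-ended, it is virtually $\mathbb{Z}$ by \cite{Hopf44}, and hence strongly periodic by the cited result: every non-empty subshift of finite type in $\Gamma$ contains a strongly periodic coloring. The whole difficulty therefore lies in encoding ``proper vertex-coloring with $\chi(G)$ colors'' as a non-empty subshift of finite type over a suitable finite alphabet $A$.

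First I would fix the alphabet $A=\{1,\ldots,\chi(G)\}$ and design the forbidden patterns so that the subshift consists exactly of the proper $\chi(G)$-colorings of $G=\Cay(\Gamma,S)$. Concretely, for each generator $s\in S\cup S^{-1}$ and each color $a\in A$, I forbid the pattern on $F=\{1_\Gamma,s\}$ that assigns color $a$ to both $1_\Gamma$ and $s$; since adjacency in $\Cay(\Gamma,S)$ is given by right-multiplication by elements of $S\cup S^{-1}$, a coloring $\sigma$ avoids all these finitely many patterns if and only if no two adjacent vertices receive the same color, i.e.\ $\sigma$ is a proper coloring. This is a finite set of patterns on finite subsets of $\Gamma$, so the resulting set $X$ of proper $\chi(G)$-colorings is a subshift of finite type in $\Gamma$.

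Next I would verify that $X$ is non-empty, which is where the bulk of the genuine content sits: I need the existence of \emph{some} proper $\chi(G)$-coloring of the infinite graph $G$. This follows from the classical compactness theorem for colorings of infinite graphs of \cite{dBE51}, since every finite subgraph of $G$ is $\chi(G)$-colorable by definition of the chromatic number. With $X$ non-empty, the strong periodicity of $\Gamma$ yields a strongly periodic coloring $\sigma\in X$. By Remark \ref{rem: QT-SFT}, strong periodicity of $\sigma$ is equivalent to $\Gamma$ inducing a quasi-transitive action on the colored graph $(\Cay(\Gamma,S),\sigma)$, which means precisely that $\sigma$ is a periodic coloring of $G$. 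As $\sigma\in X$ it is proper and uses at most $\chi(G)$ colors, completing the argument.

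The main obstacle I expect is purely in the bookkeeping of the encoding: one must make sure that the pattern language captures properness exactly (neither more nor less) and that the notion of ``pattern avoidance'' in \cite{CP15}, phrased via the $\Gamma$-orbit of a fixed finite window $F$, lines up correctly with the translation action $g\cdot\sigma(x)=\sigma(g^{-1}x)$ so that forbidding $\alpha$ on $F=\{1_\Gamma,s\}$ really forbids monochromatic edges everywhere rather than only near the identity. Once this correspondence is set up cleanly, the theorem is essentially a formal consequence of the cited strong periodicity of virtually $\mathbb{Z}$ groups together with the de Bruijn--Erd\H{o}s compactness argument for non-emptiness; no further combinatorial work on $G$ itself should be needed.
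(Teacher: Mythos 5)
Your proposal is correct and follows essentially the same route as the paper's proof: encode the proper $\chi(G)$-colorings as a subshift of finite type via forbidden monochromatic patterns on the pairs $\{1_\Gamma,s\}$, apply the strong periodicity of virtually $\mathbb{Z}$ groups from \cite{CP15} (using \cite{Hopf44} to identify 2-ended with virtually $\mathbb{Z}$), and translate the strongly periodic coloring back into a periodic one via Remark \ref{rem: QT-SFT}. The only (harmless) difference is that you invoke de Bruijn--Erd\H{o}s compactness to get non-emptiness of the subshift, whereas the paper simply takes an optimal proper vertex-coloring of $G$, which exists outright by its definition of $\chi(G)$.
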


   \begin{proof}
       Let $c$ be an optimal proper vertex-coloring of $G$, with
       colors from a finite set $A$ with $|A|=\chi(G)$, and
       consider the subshift of finite type $\mathcal{X}$ in $\Gamma$
       where for each edge $uv$ in $G$, the vertices $u$ and $v$ are
       required to have different colors. This can clearly be encoded
       by a finite set of forbidden patterns in $\Gamma$ (all
       colorings of pairs $(1_\Gamma,s)$ for $s\in S$ where $1_\Gamma$
       and $s$ have the same color). The coloring $c$ witnesses that
       the subshift of finite type $\mathcal{X}$ defined above is
       non-empty. Since $\Gamma$ is a finitely generated 2-ended
       group, it is virtually $\mathbb{Z}$ and thus strongly periodic
       \cite{CP15}. It follows that $G$ has a proper vertex-coloring
       $c'$ with colors from $A$ which has a finite orbit under the
       action of $\Gamma$. By Remark \ref{rem: QT-SFT}, this implies in
       particular that $c'$ is periodic, as desired.
     \end{proof}

    We note here that the approach using subshifts of finite type is
    inherently limited: it was proved in \cite{Pia08} that non-abelian
    free groups  are not strongly periodic, and it was even
    conjectured in \cite{CP15} that a group is strongly periodic if
    and only if it is virtually cyclic (that is, finite or virtually $\mathbb{Z}$).

    \subsection{2-ended graphs and bounded pathwidth}

We now extend Theorem \ref{thm:2endedgroup} to all 2-ended locally finite quasi-transitive graphs (or equivalently, to all locally finite quasi-transitive graphs of bounded pathwidth).
Recall that a finitely generated group is $2$-ended if and only if it is virtually $\mathbb{Z}$ \cite{Hopf44}.

\medskip

A \emph{separation} in a graph $G$ is a triple $\Sep$ where
$Y,S,Z$ are pairwise disjoint subsets of $V(G)$ with $V(G)=Y\cup
S\cup Z$ and no edge of $G$ has  an endpoint in $Y$ and the
other in $Z$. The \emph{order} of $\Sep$ is $|S|$. A separation $\Sep$
is said to \emph{separate} two ends $\omega_1,\omega_2$ of $G$ if all
but finitely many vertices of each ray of $\omega_1$ lie in $Y$, and
similarly all
but finitely many vertices of each ray of $\omega_2$ lie in $Z$. 

The following is a folklore result (see \cite[Theorem 1.1]{Jun81} for a weaker version for vertex-transitive graphs, with essentially the same proof).

\begin{lemma}
 \label{lem: 2-ends}
 Let $G$ be a connected locally finite quasi-transitive graph with two
 ends. Then $G$ has a separation $\Sep$ of finite order separating the
 two ends of $G$ and there is an element $g\in \Aut(G)$ of infinite order such that $g(S\cup Z)\subseteq Z$.
\end{lemma}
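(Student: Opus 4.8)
The plan is to first produce the separation, then pass to the subgroup fixing both ends, and finally extract the translating automorphism by a pigeonhole argument along a ray toward one end.

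\textbf{Setting up the separation.} Since the two ends $\omega_1,\omega_2$ are distinct, their rays are not equivalent, so only finitely many disjoint paths join them; hence there is a finite set $S\subseteq V(G)$ such that no ray of $\omega_1$ and no ray of $\omega_2$ meet the same component of $G-S$. Enlarging $S$ if necessary, I may assume $G-S$ has exactly two infinite components $C_1\ni\omega_1$ and $C_2\ni\omega_2$ (any further infinite component would host an end, but there are only two). Setting $Z:=C_2$ and $Y:=V(G)\setminus(S\cup Z)$, so that $Y$ absorbs $C_1$ together with all finite components, yields a separation $(Y,S,Z)$ of finite order $|S|$ separating the two ends, with $S\neq\emptyset$ because $G$ is connected. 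Every automorphism of $G$ permutes the two ends, so the subgroup $\Aut^+(G)$ fixing each of $\omega_1,\omega_2$ has index at most $2$ in $\Aut(G)$; by \cref{lem:sptop} it still acts quasi-transitively on $V(G)$, hence with finitely many orbits.

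\textbf{Finding the candidate automorphism.} Fix a ray $r=w_0w_1w_2\cdots$ of $\omega_2$ with all $w_i\in Z$. By the pigeonhole principle some $\Aut^+(G)$-orbit $O$ meets $\{w_i\}$ infinitely often. Let $w_a$ be the first vertex of $O$ on $r$, put $R:=\diam_G(S\cup\{w_a\})$, and choose $w_b\in O$ with $b>a$ so deep that $d_G(w_b,S)>R$; such $w_b$ exists since $O\cap V(r)$ is infinite and $r\to\omega_2$. Let $g\in\Aut^+(G)$ satisfy $g(w_a)=w_b$. As $g$ is an isometry, $g(S)\subseteq B_G(w_b,R)$, and since any path of length at most $R$ from $w_b$ cannot reach $S$ and $w_b\in Z$, the whole ball lies in $Z$; thus $g(S)\subseteq Z$.

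\textbf{The push and infinite order.} It remains to upgrade $g(S)\subseteq Z$ to $g(S\cup Z)\subseteq Z$, which is the crux. Because $g$ fixes both ends, $(g(Y),g(S),g(Z))$ is again a separation separating $\omega_1$ from $\omega_2$, with $g(Y)$ the side of $\omega_1$. Since the new separator $g(S)$ lies entirely in $Z$, it sits strictly on the $\omega_2$-side of $S$: an $\omega_1$-ray can be routed inside $Y\cup S$ while avoiding the far-away set $g(S)$, so $Y\cup S\subseteq g(Y)$, whence $g(Z)=V(G)\setminus(g(Y)\cup g(S))\subseteq Z$ and therefore $g(S\cup Z)=g(S)\cup g(Z)\subseteq Z$. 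Finally, $g$ has infinite order: from $g(S\cup Z)\subseteq Z$ one gets $g(Z)\subseteq Z$, hence $g^{n}(S)=g^{\,n-1}(g(S))\subseteq g^{\,n-1}(Z)\subseteq Z$ for every $n\geq 1$, so $g^{n}(S)\cap S=\emptyset$; as $S\neq\emptyset$, no $g^n$ with $n\geq 1$ is the identity.

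The main obstacle is precisely the nesting step in the last paragraph. The potential pitfalls are that $Y$ may itself be disconnected (finite components attached through $S$) and that an arbitrary automorphism sending $w_a$ to $w_b$ need not behave monotonically with respect to $S$. The choice $d_G(w_b,S)>R$ is exactly what rules this out: it forces $g(S)$ to lie deep in $Z$, so that removing $g(S)$ cannot disconnect the $\omega_1$-side $Y\cup S$ from $\omega_1$, and the two separators are therefore nested. Carrying out this connectivity argument rigorously is the only delicate point; everything else is pigeonhole on orbits together with the fact that automorphisms are isometries fixing the ends.
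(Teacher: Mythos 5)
Your route is genuinely different from the paper's (the paper takes a separation of \emph{minimum} order with $G[Z]$ connected and runs a case analysis producing $g_1$, $g_2$, or $g_1g_2^{-1}$, with no end-fixing subgroup and no ``depth'' bookkeeping), and most of your steps are sound: passing to the subgroup $\Aut^+(G)$ fixing both ends, which has index at most $2$ and acts quasi-transitively by Lemma~\ref{lem:sptop}; the pigeonhole along a ray; the isometry argument giving $g(S)\subseteq B_G(w_b,R)\subseteq Z$; and the derivation of infinite order from $g(S\cup Z)\subseteq Z$. However, the step you yourself identify as the crux --- ``$Y\cup S\subseteq g(Y)$'' --- has a genuine gap. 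Routing an $\omega_1$-ray inside $Y\cup S$ away from $g(S)$ only shows $C_1\subseteq g(C_1)\subseteq g(Y)$, where $C_1$ is the infinite component of $G-S$ in which $\omega_1$ lives. It says nothing about the vertices of $S$ itself, nor about the finite components of $G-S$ that you absorbed into $Y$: such a vertex $v$ need not have \emph{any} path to $C_1$ inside $Y\cup S$, since all of its paths to $C_1$ may pass through $Z$ --- which is exactly where $g(S)$ sits. If every such path meets $g(S)$, then $v$ lies in $g(Z)$ or in a finite component of $G-g(S)$, and the nesting fails. Your choice $d_G(w_b,S)>R$ with $R=\diam_G(S\cup\{w_a\})$ does not rule this out: it guarantees only that $g(S)$ is disjoint from $Y\cup S$ (i.e.\ $g(S)\subseteq Z$), not that $g(S)$ avoids the $Z$-traversing connections from $S$ and the finite components back to $C_1$. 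So the sentence ``it forces $g(S)$ to lie deep in $Z$, so that removing $g(S)$ cannot disconnect the $\omega_1$-side $Y\cup S$ from $\omega_1$'' is an assertion, not a consequence of your choice of $w_b$.

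The gap is repairable within your framework, but it needs an extra quantitative step. The set $W:=S\cup\bigcup_i F_i$, where the $F_i$ are the (finitely many, finite) components of $G-S$ other than $C_1$ and $C_2$, is finite; fix once and for all a path $P_v$ from each $v\in W$ to $C_1$, and choose $w_b\in O$ so deep that $B_G(w_b,R)$ is disjoint from the finite set $\bigcup_{v\in W}V(P_v)\cup S$ (possible since $O\cap V(r)$ is infinite and $r$ tends to $\omega_2$). Then $g(S)\subseteq B_G(w_b,R)\subseteq Z$ avoids every $P_v$, so each $v\in W$ is joined to $C_1\subseteq g(C_1)$ by a path missing $g(S)$, giving $W\subseteq g(C_1)$; together with $C_1\subseteq g(C_1)$ this yields $Y\cup S\subseteq g(Y)$, hence $g(S)\cup g(Z)=V(G)\setminus g(Y)\subseteq V(G)\setminus(Y\cup S)=Z$, and the rest of your argument goes through verbatim. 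With this repair your proof is correct; it trades the paper's minimality-plus-connectivity case analysis for end-stabilization and a finite amount of path bookkeeping.
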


\begin{proof}
 Let $k\in \mathbb N\setminus \sg{0}$ be the minimum order of a separation $\Sep$
 separating the two ends of $G$, i.e. such that both $G[Y]$ and $G[Z]$
 contain an infinite component of $G-S$, and let $\Sep$ be a separation of order
 $k$ separating the two ends of $G$ such that $G[Z]$ is connected. 
 Let $g_1\in \Aut(G)$ be such that $g_1(S)\subseteq Z$ (as $Z$ is
 infinite and quasi-transitive, such an
 element $g_1$ exists), and set $(Y_1,S_1,Z_1):=(g_1(Y), g_1(S), g_1( Z))$. If $Z_1\subseteq Z$, we set $g:=g_1$, and claim that $g$ must
 have infinite order, as an easy induction shows that then
 $g^{i+1}(S\cup Z)\subseteq g^i( Z)\subsetneq Z$ for all
 $i\geq 0$. If $Z_1$ is not included in $Z$, then as $G[Z]$ is
 connected (and thus $G[Z_1]$ also is) we must have $Y\subseteq Z_1$ and
 $Y_1\subseteq Z$. Let $g_2\in \Aut(G)$ be such that $g_2( S)\subseteq
 Y_1 \subseteq Z$, and set $(Y_2,S_2,Z_2):=(g_2(Y), g_2( S),
 g_2(Z))$. Again, if $Z_2\subseteq Y_1$, then $Z_2\subseteq Z$ and
 we conclude as before choosing $g:=g_2$; thus we can assume that we do not have $Z_2\subseteq Y_1$. As $G[Z_2]$ is connected we have $S_1\subseteq Z_2$ and thus $S_1\cup Z_1\subseteq Z_2$. We now conclude by choosing $g:=g_1 g_2^{-1}$, which satisfies $g (S_2\cup Z_2)=S_1\cup Z_1\subseteq Z_2$. Again, the fact that $g$ has infinite order immediately follows.
\end{proof}


We now prove the main result of this section, giving a positive answer to Problem \ref{pro:pc} for connected locally finite 2-ended quasi-transitive graphs, and extending Theorem \ref{thm:2endedgroup}.

\begin{theorem}
\label{thm: 2-ends}
    Let $G$ be a connected locally finite quasi-transitive graph with
    $2$ ends. Then $G$
    has a periodic proper vertex-coloring with $\chi(G)$ colors.
\end{theorem}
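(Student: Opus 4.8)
The plan is to reduce the general 2-ended quasi-transitive case to the group case (Theorem~\ref{thm:2endedgroup}) by using the structural information from Lemma~\ref{lem: 2-ends}, namely the existence of a separation $\Sep$ of finite order separating the two ends together with an infinite-order automorphism $g$ satisfying $g(S\cup Z)\subseteq Z$. The key idea is that the cyclic group $\langle g\rangle\cong\mathbb{Z}$ acts on $G$ in a way that mimics the action of $\mathbb{Z}$ on a bi-infinite path, and this should let me transfer the strong periodicity of virtually-$\mathbb{Z}$ groups (via subshifts of finite type) to the present setting.

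First I would fix an optimal proper vertex-coloring $c$ of $G$ using $\chi(G)$ colors, which exists by a compactness argument since $G$ is locally finite. The goal is to modify $c$ into a \emph{periodic} proper coloring without increasing the number of colors. Next I would use the automorphism $g$ from Lemma~\ref{lem: 2-ends} to carve $G$ into ``slabs'': setting $Z_0:=Z$ and $Z_i:=g^i(Z)$, the nested sequence $\cdots\supseteq Z_{-1}\supseteq Z_0\supseteq Z_1\supseteq\cdots$ partitions $V(G)$ into the fundamental domains $D_i:=Z_i\setminus Z_{i+1}$, each of which is finite (this finiteness is what makes the bounded-pathwidth/2-ended structure usable). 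Since $g$ maps $D_i$ bijectively to $D_{i+1}$, the quotient of $G$ by $\langle g\rangle$ is a finite graph, and a coloring of $G$ that is invariant under $\langle g\rangle$ is precisely a coloring of this finite quotient lifted back up. I would then set up a subshift of finite type over the alphabet of proper colorings of a single slab (together with the finitely many ``boundary'' constraints coming from the adhesion set $S$ between consecutive slabs), exactly as in the proof of Theorem~\ref{thm:2endedgroup}: the forbidden patterns encode, for adjacent slabs, that the two endpoints of every edge crossing the boundary receive distinct colors. The coloring $c$ witnesses that this subshift is nonempty, and since the underlying index structure is virtually $\mathbb{Z}$, I would invoke the strong periodicity result of \cite{CP15} to extract a coloring $c'$ in the subshift with finite orbit under $\langle g\rangle$.

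By construction $c'$ is a proper coloring of $G$ with $\chi(G)$ colors, and its finite orbit under $\langle g\rangle$ means $\langle g\rangle$ (hence the full group of color-preserving automorphisms) acts with finitely many orbits on the slabs, so $c'$ is periodic in the sense required. The main obstacle I expect is making the translation between the graph-theoretic slab decomposition and the purely group-theoretic subshift framework fully rigorous: in the Cayley-graph case the elements of $\Gamma$ simultaneously index the vertices and act on them, whereas here I must separate the role of the \emph{vertices of a slab} (a fixed finite set) from the \emph{index $i\in\mathbb{Z}$} recording which slab I am in, and carefully check that the color-preserving automorphisms coming from $\Stab(\sigma)$ in the subshift genuinely lift to automorphisms of $G$ preserving $c'$ (i.e.\ that the analogue of Remark~\ref{rem: QT-SFT} holds). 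A secondary technical point is verifying that the boundary adhesion set $S$ is mapped consistently by $g$, so that the ``local rule'' defining the subshift is genuinely finite and shift-invariant; here the property $g(S\cup Z)\subseteq Z$ from Lemma~\ref{lem: 2-ends}, combined with minimality of $|S|$, should guarantee that consecutive slabs interact only through a bounded interface, which is exactly the finite-type condition.
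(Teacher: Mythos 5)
Your proposal is correct, and it rests on the same structural backbone as the paper's proof: both start from Lemma \ref{lem: 2-ends} (the separation $(Y,S,Z)$ and an infinite-order automorphism $g$ with $g(S\cup Z)\subseteq Z$), both cut $G$ into the finite slabs between consecutive translates of $S$ under $g$, and both reduce the theorem to a one-dimensional periodization problem. You differ in how that last problem is solved. The paper does it by hand: as $S$ is finite and only $\chi(G)$ colors are used, pigeonhole yields $i<j$ with $c(g^i(x))=c(g^j(x))$ for all $x\in S$; replacing $g$ by $g^{j-i}$, one may assume $c(g(x))=c(x)$ on $S$, and then the coloring of a single slab is copied to all slabs by powers of $g$, the boundary agreement making the resulting coloring well defined and proper, and $\langle g\rangle$-invariant by construction. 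You instead take the proper colorings of one slab as the finite alphabet of a nearest-neighbor subshift of finite type over $\mathbb{Z}$ (this is legitimate: edges of $G$ join only vertices in the same or in consecutive slabs, and the crossing-edge constraint is shift-invariant because $g$ is an automorphism), note that $c$ witnesses nonemptiness, and invoke the strong periodicity of virtually-$\mathbb{Z}$ groups \cite{CP15} as in Theorem \ref{thm:2endedgroup} to get a configuration fixed by some $p\mathbb{Z}$, i.e.\ a proper coloring invariant under $g^p$; since the slabs are finite, cover $V(G)$, and are permuted by $g$, the subgroup $\langle g^p\rangle$, and hence the full group of color-preserving automorphisms, acts quasi-transitively. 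Both routes work, and they are morally the same argument: over $\mathbb{Z}$, the existence of a periodic point in a nonempty subshift of finite type is itself proved by pigeonhole (a bi-infinite walk in a finite digraph must repeat a vertex, giving a cycle to loop), so you need only this elementary special case of \cite{CP15}, not its full strength. Your reduction buys reuse of the subshift machinery already set up for Cayley graphs; the paper's direct argument buys self-containedness and skips the translation overhead (the analogue of Remark \ref{rem: QT-SFT}, which you correctly identify as the delicate point) that your write-up would still have to carry out in detail.
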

\begin{proof}
We let $\Sep$ and $g\in \Aut(G)$ be given by Lemma \ref{lem: 2-ends}.
For each $i\in \mathbb Z$, let $\Sepi{i}:=(g^i (Y), g^i(S), g^i(Z))$. Then $S_{j}\cup Z_{j}\subseteq Z_i$ for all $i<j$ and $\Sepi{i}$ also separates the two ends of $G$. Let $c: V(G)\to [\chi(G)]$ be a proper vertex-coloring of $G$. By the pigeonhole principle there exist $i<j$ such that $c(g^i(x))=c(g^j(x))$ for all $x\in S$. Up to replacing $g$ by $g^{j-i}$, we may assume that $i=0$ and $j=1$, i.e., that $c(g(x))=c(x)$ for all $x\in S$.

For all $i\in \mathbb Z$, we let $V_i:=V(G)\setminus (Y_i\cup Z_{i+1})$. Then for each $i\in \mathbb Z$, $S_i\cup S_{i+1}\subseteq V_i$, $V_{i+1}=g(V_i)$, $V_i\cap V_{i+1}=S_{i+1}$, and as $G$ has two ends and bounded degree, the graph $G_i:=G[V_i]$ is finite. Moreover, note that $\sg{V_i: i\in \mathbb Z}$ covers $V(G)$.
We also observe that for every $i\leq i'\leq j$, we have $V_i\cap V_j=V(G)\setminus (Y_j\cup Z_{i+1})\subseteq V(G)\setminus (Y_{i'}\cup Z_{i'+1})=V_{i'}$. This last inclusion implies in particular that for every $i< j$ and $v\in V_i\cap V_j$, we have $v\in V_i\cap V_{i+1}\cap V_{j-1}\cap V_j=S_{i+1}\cap S_{j+1}$. Hence, by our choice of $\Sep$, we then have $c(g^{-i}(v))=c(g^{-j}(v))$.

We now define a vertex-coloring $\tc: V(G)\to [\chi(G)]$ by  
setting for each $i\in \mathbb Z$ and $v\in V_i$,
$\tc(v):=c(g^{-i}(v))$. 
In other words, the vertex-coloring $\tc$ is
obtained by repeating periodically $c|_{V_0}$ on each $G_i$. First,
note that by our previous remark and the fact that $\sg{V_i: i\in \mathbb Z}$ covers $V(G)$, $\tc$ is well-defined on $V(G)$. Moreover, by definition $g$ is color-preserving, i.e., $\tc(g(v))=\tc(v)$ for all $v\in V(G)$. 

We show that $\tc$ is a proper vertex-coloring. For this, we show that for every edge $uv\in E(G)$, there exists $i\in \mathbb Z$ such that $u,v\in V_i$. This simply follows from the observation that for each $i<j$, $S_{i'+1}$ separates $Y_i$ from $Z_j$. In particular, for every edge 
$uv\in E(G)$, if 
is such that $u,v\in V_j$, then  $g^{-i}(u)$ and $g^{-i}(v)$ are also adjacent in $G_0$ and thus
$\tc(u)=c(g^{-i}(u))\neq c(g^{-i}(v))=\tc(v)$, proving that $\tc$ is a proper vertex-coloring of $G$.

As the sets $V_i$ are finite and cover $V(G)$, and  $g( V_i)=V_{i+1}$
for each $i\in \mathbb Z$, the subgroup of $\Aut(G)$ generated by $g$ induces a quasi-transitive action on $V(G)$. We conclude that $G$ has a periodic proper $\chi(G)$-coloring.
%
%
%
%
%
%
\end{proof}

We recall that the \emph{chromatic index} of a graph  $G$, denoted by $\chi'(G)$, is the minimum number of colors
 in a proper edge-coloring of $G$. This is well defined when the
 maximum degree of the graph is finite, which is the case for locally
 finite quasi-transitive graphs. The \emph{line-graph} $L(G)$ of a
 graph $G$ is the graph with vertex set $E(G)$ in which two vertices
 are adjacent if and only if the corresponding edges of $G$ share a
 vertex. Note that $\chi'(G)=\chi(L(G))$ for any graph $G$.

 \smallskip
 
 We obtain the following consequence of Theorem \ref{thm: 2-ends} for
 edge-colorings of 2-ended locally finite quasi-transitive 
graphs.

\begin{corollary}
 \label{cor: edge-col}
 If $G$ is a connected locally finite quasi-transitive graph with
 2 ends, then there exists a periodic proper edge-coloring of $G$ with $\chi'(G)$ colors. 
\end{corollary}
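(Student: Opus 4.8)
The plan is to deduce the statement from \Cref{thm: 2-ends} applied to the line graph $L(G)$, exploiting the identity $\chi'(G)=\chi(L(G))$. So first I would verify that $L(G)$ satisfies all the hypotheses of \Cref{thm: 2-ends}. Since $G$ is connected and, being $2$-ended, infinite, it has at least one edge, and hence $L(G)$ is connected. Local finiteness of $L(G)$ is immediate: an edge $uv$ of $G$ is adjacent in $L(G)$ to at most $\deg_G(u)+\deg_G(v)-2$ edges, which is finite. For quasi-transitivity, note that every $h\in\Aut(G)$ induces an automorphism of $L(G)$, so $\Aut(G)$ embeds into $\Aut(L(G))$; as $G$ is locally finite and quasi-transitive, $\Aut(G)$ already acts with finitely many orbits on $E(G)=V(L(G))$ (this was observed in the proof of \Cref{lem:edge-inversion}), and therefore so does the possibly larger group $\Aut(L(G))$.

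Next I would pin down the number of ends of $L(G)$. The map $\phi\colon V(G)\to V(L(G))$ sending each vertex to an arbitrarily chosen incident edge is a quasi-isometry: one checks $|d_{L(G)}(\phi(u),\phi(v))-d_G(u,v)|\le 1$ (a shortest $u$--$v$ path in $G$ is a path of edges in $L(G)$, and conversely a path in $L(G)$ traces a walk in $G$ between endpoints), and the image is $1$-dense since any edge $xy$ satisfies $d_{L(G)}(xy,\phi(x))\le 1$. As the number of ends of a locally finite graph is a quasi-isometric invariant, $L(G)$ has exactly $2$ ends. Thus all hypotheses hold, and \Cref{thm: 2-ends} yields a periodic proper vertex-coloring $c$ of $L(G)$ with $\chi(L(G))=\chi'(G)$ colors. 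By definition $c$ is exactly a proper edge-coloring of $G$ with $\chi'(G)$ colors, so it only remains to check that it is periodic \emph{as a coloring of $G$}, i.e.\ that the subgroup of $\Aut(G)$ preserving it acts quasi-transitively on $V(G)$.

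This last transfer is the point requiring care, since periodicity of $c$ is a statement about $\Aut(L(G))$, whereas the definition of a periodic edge-coloring of $G$ refers to $\Aut(G)$. The hard part will be to identify these two groups, and here I would invoke Whitney's theorem in its form for infinite connected graphs: since $G$ is connected and infinite (in particular not among the finitely many exceptional graphs), the natural homomorphism $\Aut(G)\to\Aut(L(G))$ is an isomorphism. Consequently every $c$-preserving automorphism of $L(G)$ is induced by a unique automorphism of $G$ preserving the edge-coloring. The $c$-preserving automorphisms of $L(G)$ act quasi-transitively on $V(L(G))=E(G)$; transporting this through the isomorphism, the corresponding subgroup $\Gamma\le\Aut(G)$ preserves the edge-coloring and acts quasi-transitively on $E(G)$.

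Finally I would upgrade quasi-transitivity on edges to quasi-transitivity on vertices. As $G$ has no isolated vertices, every vertex is an endpoint of some edge; choosing representatives of the finitely many $\Gamma$-orbits of $E(G)$ and taking their (at most twice as many) endpoints gives finitely many vertices meeting every $\Gamma$-orbit of $V(G)$, so $\Gamma$ acts quasi-transitively on $V(G)$ as well. Hence the edge-coloring is periodic, which completes the argument. The only external ingredient beyond \Cref{thm: 2-ends} is Whitney's theorem; the remaining verifications (connectivity, local finiteness, quasi-transitivity, and the end count via quasi-isometry) are routine.
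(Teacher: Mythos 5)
Your proof is correct, and it follows the same route as the paper: check that $L(G)$ is connected, locally finite, quasi-transitive and $2$-ended, apply Theorem \ref{thm: 2-ends} to $L(G)$, and use $\chi'(G)=\chi(L(G))$. The minor differences in the verification stage are immaterial (the paper establishes that $L(G)$ is $2$-ended via the correspondence between rays and end-separators of $G$ and of $L(G)$, where you use quasi-isometry invariance of the number of ends; both are fine). The one genuine difference is your transfer step. The paper's proof ends with ``we can thus conclude by applying Theorem \ref{thm: 2-ends} to $L(G)$'', silently identifying a periodic proper vertex-coloring of $L(G)$ (a statement about the stabilizer in $\Aut(L(G))$) with a periodic proper edge-coloring of $G$ (a statement about the stabilizer in $\Aut(G)$). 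You correctly observe that these are not the same by definition, since a priori $\Aut(L(G))$ could contain automorphisms not induced by $G$, and your appeal to Whitney's theorem for infinite connected graphs --- that the natural homomorphism $\Aut(G)\to\Aut(L(G))$ is an isomorphism --- together with the (correct) upgrade from finitely many edge-orbits to finitely many vertex-orbits, is exactly what closes this gap. So your write-up is, if anything, more complete than the paper's on this point. An alternative patch that avoids Whitney's theorem, and is the one hinted at in Remark \ref{rem:flow}, is to note that Lemma \ref{lem: 2-ends} and Theorem \ref{thm: 2-ends} go through verbatim for any quasi-transitive subgroup of automorphisms, so one may run them inside the image of $\Aut(G)$ in $\Aut(L(G))$ (or directly rework the proof of Theorem \ref{thm: 2-ends} on edges); this yields a coloring whose preserving automorphisms of $L(G)$ are induced from $G$ by construction.
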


\begin{proof}
 Note that the line-graph $L(G)$ is also locally finite, connected,
 and quasi-transitive (if there are $k$ orbits of $V(G)$ under the
 action of $\Aut(G)$, there are at most $k\Delta(G)$ orbits of
 $E(G)$ under the action of $\Aut(G)$). Moreover, as rays in $L(G)$
 are in correspondence with rays of $G$ and similarly finite subsets
 separating the ends of $G$ are in correspondence with finite subsets
 separating the ends of $L(G)$ (in both cases we use the fact that $G$
 has bounded degree), $L(G)$ is also 2-ended.
 We can thus conclude by applying Theorem \ref{thm: 2-ends} to $L(G)$.
\end{proof}

\begin{remark}\label{rem:flow}
    Corollary \ref{cor: edge-col} can also be proved directly by a simple modification of the proof of Theorem \ref{thm: 2-ends}, and by similar modifications the proof can be adapted  to work with other types of objects in graphs, for instance periodic perfect matchings (assuming $G$ has a perfect matchings), periodic nowhere-zero $k$-flows (assuming $G$ has a nowhere-zero $k$-flow), etc. In all these cases, if a connected locally finite 2-ended quasi-transitive graph $G$ can be decorated with an additional structure $\mathcal{S}$, then $G$ can be decorated with a periodic version of $\mathcal{S}$.
\end{remark}

  \section{Discussion and open problems}\label{sec:ccl}

  Recall our original question: is it true that  every quasi-transitive graph $G$ can be
decorated with a non-trivial additional structure such that the graph $G$ is still
quasi-transitive if we restrict ourselves to automorphisms preserving
the additional structure? We have considered two specific instances of this problem (Problems \ref{pro:4} and \ref{pro:5}) and proved that each of them has a negative answer, even for Cayley graphs and graphs of bounded treewidth. The examples providing a negative answer to Problem \ref{pro:4} even rule out the existence of a non-trivial vertex-coloring that is periodic, which  immediately rules out  the existence of a number of other natural structures in graphs (any non-trivial vertex subset for instance) that would be periodic, in the same sense as before.

\medskip

The corresponding problem for edge-colorings or edge-subsets appears to be more difficult. We say that an edge-coloring of a graph $G$ is \emph{trivial} if for each connected component $C$ of $G$, all the edges of $C$ are assigned the same color. We also say that a graph is \emph{trivial} if all its connected components contain at most one edge (note that every edge-coloring of a trivial graph is trivial). 

\begin{problem}\label{pro:6}
    Is it true that every non-trivial locally finite quasi-transitive graph has a
    periodic non-trivial  edge-coloring?
\end{problem}

Color reduction arguments also work for non-trivial edge-colorings. Consider a non-trivial edge-coloring $c$ of a graph $G$.
If a component $C$ of $G$ contains at least two edges, then $c$ uses at least two different colors from $C$, say colors $1,2$. For every $e\in E(G)$, let $c'(e)=1$ if $c(e)=1$, and $c'(e)=2$ otherwise. Note that $c'$ is non-trivial and only uses 2 colors. It follows that Problem \ref{pro:6} is equivalent to the following problem.

\begin{problem}\label{pro:7}
    Is it true that every non-trivial locally finite quasi-transitive graph has a
    periodic non-trivial edge-coloring with two colors?
\end{problem}

Note that coloring the edges of $G$ with two colors is equivalent to choosing a \emph{spanning} subgraph $H$ of $G$, by which we mean a subgraph of $G$ with vertex set $V(G)$ (that is, obtained from $G$ by only removing edges). This can be seen by considering the spanning subgraph of $G$ whose edges are all the edges of $G$ colored 1 (note that this spanning subgraph possibly contains isolated vertices).  So we can ask equivalently:

\begin{problem}\label{pro:8}
    Is it true that every non-trivial locally finite quasi-transitive graph has a
    periodic non-trivial spanning subgraph?
\end{problem}

Here, by \emph{non-trivial} subgraph $H$ of a graph $G$, we mean that at least
one connected component of $G$ contains an edge of $H$ and a non-edge
of $H$ (that is an edge of $G$ which is not in $H$). By \emph{periodic
  subgraph} we mean a subgraph $H$ of $G$ such that the subgroup of
automorphisms of $G$ stabilizing $H$ (that is, mapping edges of $H$ to
edges of $H$ and non-edges of $H$ to non-edges of $H$) acts
quasi-transitively on $G$.

\medskip

Note that these problems have a trivial (positive) answer for graphs that are not edge-transitive (by taking $H$ to be an orbit of the action of $\Aut(G)$ on the edge-set of $G$), so any possible counterexample must be edge-transitive.

\medskip

While we were not able to provide a negative answer to these problems,
the examples constructed in the previous sections severely restrict the properties of
such a spanning subgraph $H$ in general. Let us say that a class of
graphs $\mathcal{C}$ is \emph{periodizing} if any non-trivial locally finite
quasi-transitive graph $G$  has a
    periodic non-trivial spanning subgraph $H\in \mathcal{C}$. Problem \ref{pro:8} asks whether the class of all graphs is periodizing. We now give a number of sufficient conditions on periodizing graph classes.

\begin{lemma}\label{lem:3}
Every periodizing class contains a vertex-transitive (and thus regular) infinite graph of minimum degree at least 2. 
\end{lemma}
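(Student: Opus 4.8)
The plan is to exhibit a single well-chosen graph $G$ for which \emph{every} periodic non-trivial spanning subgraph is forced to be vertex-transitive, infinite, and of minimum degree at least $2$; since $\mathcal C$ is periodizing it must then contain such a subgraph, which is exactly the statement. The natural candidate for $G$ is a graphical regular representation of an infinite simple group. Concretely I would set $G=\Cay(\Gamma,T)$ where $\Gamma$ is R.\ Thompson's group $V$ and $T$ is the finite generating set \emph{containing no element of order $2$} provided by \cref{lem:V}, so that $\Aut(G)=\Gamma$ acts regularly on $V(G)=\Gamma$. Note that $G$ is a legitimate input to the periodizing property: it is connected, infinite, locally finite and quasi-transitive, and since it is connected with more than one edge it is non-trivial.

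First I would invoke the definition of periodizing to obtain a periodic non-trivial spanning subgraph $H\in\mathcal C$, and study its stabilizer $\Gamma'=\Stab(H)\le\Aut(G)=\Gamma$ (the subgroup of automorphisms mapping edges of $H$ to edges and non-edges to non-edges). By definition of periodicity, $\Gamma'$ acts quasi-transitively on $V(G)$, so by \cref{lem:ptosp} it has finite index in $\Gamma$. Since $\Gamma=V$ is infinite and simple, \cref{cor:2} forces $\Gamma'=\Gamma$, so $H$ is invariant under the full regular action of $\Gamma$.

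Next I would identify $H$ explicitly. Because right-multiplication by $\Gamma$ preserves the left-difference $hg^{-1}$ of an edge $\{g,h\}$, invariance of $H$ means that whether $\{g,h\}\in E(H)$ depends only on $hg^{-1}$; hence $H=\Cay(\Gamma,T')$ for the symmetric set $T'=\{s\in T\cup T^{-1}:\{1_\Gamma,s\}\in E(H)\}\subseteq T\cup T^{-1}$. In particular $H$ is a Cayley graph of $\Gamma$, so it is vertex-transitive (hence regular) and infinite, as $\Gamma$ is infinite. The non-triviality of $H$ translates into $\emptyset\neq T'\subsetneq T\cup T^{-1}$, and the (common) degree of $H$ equals $|T'|$, since the neighbours of a vertex $g$ are exactly the $sg$ with $s\in T'$.

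The final — and only delicate — point is to rule out $|T'|=1$, i.e.\ to guarantee minimum degree at least $2$. A nonempty symmetric set $T'$ is a singleton only if it consists of a single element of order $2$; but $T$, and therefore $T\cup T^{-1}$, contains no element of order $2$, so any $s\in T'$ satisfies $s\neq s^{-1}$ and forces $s^{-1}\in T'$ as well, giving $|T'|\ge 2$. This is precisely where the no-order-$2$ clause of \cref{lem:V} is used, and I expect it to be the step most easily overlooked: without it the class could be ``periodized'' by perfect matchings, which have degree $1$ and would not establish the lemma. Assembling these observations, $H\in\mathcal C$ is a vertex-transitive infinite graph of minimum degree at least $2$, which completes the proof.
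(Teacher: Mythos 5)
Your proof is correct and takes essentially the same approach as the paper: the same graph $G=\Cay(V,T)$ (a graphical regular representation of Thompson's group $V$ with no generator of order $2$, via Lemma~\ref{lem:V}), the same reduction through Lemma~\ref{lem:ptosp} and Corollary~\ref{cor:2} to conclude that all of $\Aut(G)=V$ stabilizes $H$, and the same use of the absence of order-$2$ elements to force degree at least $2$. The only cosmetic difference is the final step: you identify $H$ explicitly as the graph with symmetric connection set $T'\subseteq T\cup T^{-1}$ and note $|T'|\ge 2$, whereas the paper derives a contradiction from a degree-$1$ vertex by exhibiting an automorphism swapping an edge --- these are the same observation in two guises.
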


\begin{proof}
Consider the Cayley graph $G=\Cay(V,T)$ given by Lemma \ref{lem:V}, where we consider Thompson's group $V$, which is  a finitely generated infinite simple group, and where $T$ is a finite generating set of $V$ containing no element of order 2, and such that $G$ is a graphical regular representation of $V$ (that is, $\Aut(G)=V$, see Section \ref{sec:example}). Assume that $G$ has a non-trivial spanning subgraph $H$  such that the subgroup $\Gamma$ of automorphisms of $G$ stabilizing $H$ acts quasi-transitively on $G$. As in the proof of Lemma \ref{lem:2}, it directly follows from Corollary \ref{cor:2} that $\Gamma=V$, that is, all the automorphisms of $G$ stabilize $H$. Since $G$ is vertex-transitive, $H$ is also vertex-transitive and thus $d$-regular with $d\ge 1$, since $H$ is non-trivial. Assume for the sake of contradiction that $d=1$ and consider two adjacent vertices $u,v$ in $H$. Consider an automorphism $g$ of $G$ that maps $u$ to $v$, and observe that $g$ must map $v$ to $u$, since otherwise $v$ would have degree 2 in $H$. It then follows that the element $s\in T$ corresponding to the edge $uv$ in $G$ has order 2 in $V$, a contradiction. 
\end{proof}

A \emph{$k$-factor} in a graph is a spanning subgraph in which every vertex has degree $k$.
Observe that Lemma \ref{lem:3} implies that Problem \ref{pro:8} has a negative answer if we require $H$ to be a forest, and in particular a 1-factor (i.e., a perfect matching). The simplest version of Problem \ref{pro:8} we can think of is thus the following. 

\begin{problem}\label{pro:9}
    Is it true that every  4-regular vertex-transitive infinite graph has
    a periodic 2-factor?
\end{problem}

A classical result of Petersen \cite{Pet91} asserts that every finite Eulerian graph $G$ has a spanning subgraph $H$ such that for every vertex $v\in V(G)$, $d_H(v)=d_G(v)/2$, and the result can be extended to infinite locally finite graphs by compactness. Hence, every infinite 4-regular graph contains a 2-factor.

\subsection*{Planar graphs} Planar Cayley graphs are a well-studied topic (see \cite{GH19} and the reference therein), and by extension planar quasi-transitive graphs have been extensively studied (see for instance \cite{Babai97,HamannPlanar,EGL23,Mac23}). We have seen examples of planar quasi-transitive graphs with no periodic orientation in Section \ref{sec:tw} (these examples are all $\infty$-ended), and in Section \ref{sec:pw} we have seen that all 2-ended quasi-transitive graphs have a periodic proper vertex-coloring (and thus a periodic orientation). A natural question is whether all planar 1-ended quasi-transitive graphs have a periodic proper vertex-coloring. Using a theorem of Babai \cite{Babai97} providing embeddings of planar 1-ended 3-connected quasi-transitive graphs in the Euclidean plane or the hyperbolic plane, it can be proved that in the Euclidean case the graphs have a periodic proper vertex-coloring (see also the discussion following Question 4.8 in \cite{Tim11}). However, it is unclear whether the same holds in the hyperbolic case.

\begin{problem} Are there 1-ended quasi-transitive planar graphs without periodic proper vertex-coloring?
\end{problem}

\subsection*{Acknowledgements}
We thank Emmanuel Jeandel for his remark on the Osin group, and Sergey Norin and Piotr Przytycki for the interesting discussions and their involvement in the early stages of the project. We also thank Agelos Georgakopoulos for his comments on a preliminary version of the manuscript, and the reviewers for their helpful comments and suggestions.

\bibliographystyle{alpha}
\bibliography{biblio}

\begin{thebibliography}{HLMR22}

\bibitem[Ant11]{Pichel09}
Yago Antol\'in.
\newblock On {C}ayley graphs of virtually free groups.
\newblock {\em Groups Complex. Cryptol.}, 3(2):301--327, 2011.

\bibitem[Bab97]{Babai97}
L\'aszl\'o Babai.
\newblock The growth rate of vertex-transitive planar graphs.
\newblock In {\em Proceedings of the {E}ighth {A}nnual {ACM}-{SIAM} {S}ymposium
  on {D}iscrete {A}lgorithms ({N}ew {O}rleans, {LA}, 1997)}, pages 564--573.
  ACM, New York, 1997.

\bibitem[Bab22]{Bab22}
L{\'a}szl{\'o} Babai.
\newblock Asymmetric coloring of locally finite graphs and profinite
  permutation groups: {Tucker}'s conjecture confirmed.
\newblock {\em J. Algebra}, 607:64--106, 2022.

\bibitem[Bea76]{Bea76}
Dwight~R. Bean.
\newblock Effective coloration.
\newblock {\em J. Symb. Log.}, 41:469--480, 1976.

\bibitem[BQ17]{BQ17}
Collin Bleak and Martyn Quick.
\newblock The infinite simple group {$V$} of {Richard} {J}. {Thompson}:
  presentations by permutations.
\newblock {\em Groups Geom. Dyn.}, 11(4):1401--1436, 2017.

\bibitem[CFP96]{CFP96}
James~W. Cannon, William~J. Floyd, and Walter~R. Parry.
\newblock Introductory notes on {Richard} {Thompson}'s groups.
\newblock {\em Enseign. Math. (2)}, 42(3-4):215--256, 1996.

\bibitem[Coh17]{Coh17}
David~Bruce Cohen.
\newblock The large scale geometry of strongly aperiodic subshifts of finite
  type.
\newblock {\em Adv. Math.}, 308:599--626, 2017.

\bibitem[CP15]{CP15}
David Carroll and Andrew Penland.
\newblock Periodic points on shifts of finite type and commensurability
  invariants of groups.
\newblock {\em New York J. Math.}, 21:811--822, 2015.

\bibitem[CV86]{CV86}
Richard Cole and Uzi Vishkin.
\newblock Deterministic coin tossing with applications to optimal parallel list
  ranking.
\newblock {\em Inf. Control.}, 70(1):32--53, 1986.

\bibitem[dBE51]{dBE51}
Nicolaas~Govert de~Bruijn and P{\'a}l Erd{\H{o}}s.
\newblock A colour problem for infinite graphs and a problem in the theory of
  relations.
\newblock {\em Nederl. Akad. Wet., Proc., Ser. A}, 54:371--373, 1951.

\bibitem[EGLD24]{EGL23}
Louis Esperet, Ugo Giocanti, and Clément Legrand-Duchesne.
\newblock The structure of quasi-transitive graphs avoiding a minor with
  applications to the domino problem.
\newblock {\em J. Combin. Theory Ser. B}, 169:561--613, 2024.

\bibitem[GH19]{GH19}
Agelos Georgakopoulos and Matthias Hamann.
\newblock The planar {Cayley} graphs are effectively enumerable. {I}:
  {Consistently} planar graphs.
\newblock {\em Combinatorica}, 39(5):993--1019, 2019.

\bibitem[God80]{God80}
Chris~D. Godsil.
\newblock Neighbourhoods of transitive graphs and {GRR}'s.
\newblock {\em J. Comb. Theory, Ser. B}, 29:116--140, 1980.

\bibitem[Got51]{Got51}
Walter~H. Gottschalk.
\newblock Choice functions and {Tychonoff}'s theorem.
\newblock {\em Proc. Am. Math. Soc.}, 2:172, 1951.

\bibitem[Hal73]{Halin73}
Rudolf Halin.
\newblock Automorphisms and endomorphisms of infinite locally finite graphs.
\newblock {\em Abh. Math. Sem. Univ. Hamburg}, 39:251--283, 1973.

\bibitem[Ham15]{HamannPlanar}
Matthias Hamann.
\newblock Planar transitive graphs.
\newblock {\em Electronic Journal of Combinatorics}, 25:4.8, 2015.

\bibitem[Ham18]{HamannAccessibility}
Matthias Hamann.
\newblock Accessibility in transitive graphs.
\newblock {\em Combinatorica}, 38(4):847--859, 2018.

\bibitem[HLMR22]{HamannStallings22}
Matthias Hamann, Florian Lehner, Babak Miraftab, and Tim Rühmann.
\newblock A {S}tallings type theorem for quasi-transitive graphs.
\newblock {\em J. Combin. Theory Ser. B}, 157:40--69, 2022.

\bibitem[Hop44]{Hopf44}
Heinz Hopf.
\newblock Enden offener {R{\"a}ume} und unendliche diskontinuierliche
  {Gruppen}.
\newblock {\em Comment. Math. Helv.}, 16:81--100, 1944.

\bibitem[Jun81]{Jun81}
Heinz~Adolf Jung.
\newblock A note on fragments of infinite graphs.
\newblock {\em Combinatorica}, 1:285--288, 1981.

\bibitem[Kie81]{Kie81}
Henry Kierstead.
\newblock Recursive colorings of highly recursive graphs.
\newblock {\em Can. J. Math.}, 33:1279--1290, 1981.

\bibitem[LdlS21]{LS21}
Paul-Henry Leemann and Mikael de~la Salle.
\newblock Cayley graphs with few automorphisms.
\newblock {\em J. Algebr. Comb.}, 53(4):1117--1146, 2021.

\bibitem[LdlS22]{LS22a}
Paul-Henry Leemann and Mikael de~la Salle.
\newblock Cayley graphs with few automorphisms: the case of infinite groups.
\newblock {\em Ann. Henri Lebesgue}, 5:73--92, 2022.

\bibitem[Los86]{Loss86}
K.~I. Lossow.
\newblock {SQ}-universality of free products with amalgamated finite subgroups.
\newblock {\em Sib. Math. J.}, 27:890--899, 1986.

\bibitem[Mac23]{Mac23}
Joseph MacManus.
\newblock Accessibility, planar graphs, and quasi-isometries.
\newblock arXiv e-print 2310.15242, 2023.

\bibitem[M{\"{o}}l92]{M92}
R\"{o}gnvaldur~G. M{\"{o}}ller.
\newblock Ends of graphs. {II}.
\newblock {\em Math. Proc. Cambridge Philos. Soc.}, 111(3):455--460, 1992.

\bibitem[Osi10]{Osi10}
Denis Osin.
\newblock Small cancellations over relatively hyperbolic groups and embedding
  theorems.
\newblock {\em Ann. Math. (2)}, 172(1):1--39, 2010.

\bibitem[Pet91]{Pet91}
Julius Petersen.
\newblock Die {T}heorie der regul\"aren {G}raphen.
\newblock {\em Acta Math.}, 15:193--220, 1891.

\bibitem[Pia08]{Pia08}
Steven~T. Piantadosi.
\newblock Symbolic dynamics on free groups.
\newblock {\em Discrete Contin. Dyn. Syst.}, 20(3):725--738, 2008.

\bibitem[Pik21]{Pik21}
Oleg Pikhurko.
\newblock Borel combinatorics of locally finite graphs.
\newblock In {\em Surveys in combinatorics 2021. Based on plenary lectures
  given at the 28th British combinatorial conference, hosted online by Durham
  University, Durham, UK, July 5--9, 2021}, pages 267--320. Cambridge:
  Cambridge University Press, 2021.

\bibitem[Sch80]{Sch80}
James~H. Schmerl.
\newblock Recursive colorings of graphs.
\newblock {\em Can. J. Math.}, 32:821--830, 1980.

\bibitem[Tim11]{Tim11}
{\'A}d{\'a}m Tim{\'a}r.
\newblock Invariant colorings of random planar maps.
\newblock {\em Ergodic Theory Dyn. Syst.}, 31(2):549--562, 2011.

\bibitem[Tim24]{Tim24}
\'{A}d\'am Tim\'ar.
\newblock Finitely dependent random colorings of bounded degree graphs.
\newblock arXiv e-print 2402.17068, 2024.

\bibitem[Tit70]{Tits70}
Jacques Tits.
\newblock Sur le groupe des automorphismes d'un arbre.
\newblock In {\em Essays on {T}opology and {R}elated {T}opics ({M}\'emoires
  d\'edi\'es \`a{} {G}eorges de {R}ham)}, pages 188--211. Springer, New
  York-Berlin, 1970.

\end{thebibliography}

\end{document}